\numberwithin{figure}{section}
\numberwithin{equation}{section} 
\numberwithin{figure}{section} 
\theoremstyle{plain}
\theoremstyle{plain}
\theoremstyle{plain}
\theoremstyle{remark}
\theoremstyle{remark}
\theoremstyle{plain}
\newcommand{\R}{\mathbb{R}}
\def\<{{\langle }}
\def\>{{\rangle }}
\def\<{{\langle }}
\def\>{{\rangle }}
\theoremstyle{plain}
\newtheorem{theorem}{Theorem}[section]
\newtheorem*{theorem*}{Theorem}
\newtheorem*{thm*}{Theorem}
\newtheorem{proposition}{Proposition}[section]
\newtheorem{corollary}{Corollary}[section]
\newtheorem{remark}{Remark}[section]
\newtheorem{lemma}{Lemma}[section]
\newtheorem{definition}{Definition}[section]
\title[Embedded cylindrical and doughnut-shaped $\lambda$-hypersurfaces]
{Embedded cylindrical and doughnut-shaped $\lambda$-hypersurfaces}
\author{Qing-Ming Cheng,  Junqi Lai and Guoxin Wei}
\address{Qing-Ming Cheng \\ Department of Applied Mathematics, Faculty of Science,
Fukuoka  University, 814-0180, Fukuoka,  Japan, cheng@fukuoka-u.ac.jp}
\address{Junqi Lai \\  School of Mathematical Sciences, South China Normal University,
510631, Guangzhou,  China, 2019021668@m.scnu.edu.cn}
\address{Guoxin Wei \\  School of Mathematical Sciences, South China Normal University,
510631, Guangzhou,  China, weiguoxin@tsinghua.org.cn}
\begin{document}
	\maketitle
	
\begin{abstract}
In the paper, we construct, for $\lambda>0$,  complete embedded and non-convex $\lambda$-hypersurfaces, which
are diffeomorphic to a cylinder. Hence, one can not expect that $\lambda$-hypersurfaces share a common  conclusion
on the  planar domain conjecture  even if the planar domain conjecture of T. Ilmanen for self-shrinkers of mean
curvature flow are solved by Brendle \cite{B} affirmatively. Furthermore, for a fixed $\lambda<0$ which may have
small   $|\lambda|$,
we can  construct two  compact embedded $\lambda$-hypersurfaces
which are diffeomorphic to $\mathbb{S}^{1} \times \mathbb{S}^{n-1} $, but they  are not  isometric to each other.
\end{abstract}
\footnotetext{The first author was partially  supported by JSPS Grant-in-Aid for Scientific Research (C) No.22K03303.
The third author was partly supported by NSFC Grant No.12171164.}
	
\section{Introduction}
\noindent
A hypersurface $\Sigma^n \subset \R^{n+1}$ is called a $\lambda$-hypersurface if it satisfies
\begin{equation}\label{0316eq1.1}
H + \left\langle X, \nu \right\rangle= \lambda,
\end{equation}
where $\lambda$ is a constant, $X$,  $\nu$  and $H$  are  the position vector,  a unit
normal vector and the mean curvature of the hypersurface $\Sigma^n \subset \R^{n+1}$, respectively.
Note that the standard sphere with inward unit normal vector
has positive mean curvature in our convention.
The notation of $\lambda$-hypersurfaces were first introduced by Cheng and Wei in \cite{CW} (also see \cite{MR}).
Cheng and Wei \cite{CW} proved that $\lambda$-hypersurfaces are critical points of the weighted area functional with respect to
weighted volume-preserving variations.
$\lambda$-hypersurfaces can also be viewed as stationary solutions to the isoperimetric problem in the
Gaussian space. For more information on $\lambda$-hypersurfaces, one can see \cite{CW} and \cite{MR}.
\\
Since an orientable hypersurface has two directions of the unit normal vector,  if you change the direction of the normal vector,
the $\lambda$ will change its sign. In this paper, we choose the inward unit normal vector.
\\	
It  is well known that  there are several special complete embedded solutions to (\ref{0316eq1.1}):
\begin{itemize}
\item hyperplanes with a distance of $|\lambda|$ from the origin,
\item sphere with radius $\frac{-\lambda + \sqrt{\lambda^2 + 4n }}{2}$ centered at origin,
\item cylinders with an axis through the origin and radius $\frac{-\lambda + \sqrt{\lambda^2 + 4(n - 1) }}{2}$.
\end{itemize}

\begin{remark}
If $\lambda=0$, $\langle X, \nu\rangle +H=0$, then $X:\Sigma^n\to  \mathbb{R}^{n+1}$ is a self-shrinker of mean curvature flow,
that is,  self-shrinkers are  $0$-hypersurfaces. Hence,
one knows that  $\lambda$-hypersurfaces are  a natural generalization of  self-shrinkers
of  mean curvature flow, which play an important role for study on singularities of the mean curvature flow.
\end{remark}

\noindent
In \cite{CW1}, Cheng and Wei constructed the first nontrivial example of a $\lambda$-hypersurface
which is diffeomorphic to $\mathbb S^{n-1} \times \mathbb S^1$.
In \cite{R}, using a similar method to McGrath \cite{M}, Ross constructed a $\lambda$-hypersurface in $\R^{2n+2}$
which is diffeomorphic to $\mathbb S^{n} \times \mathbb S^{n} \times \mathbb S^1$ and exhibits a $SO(n) \times SO(n)$ rotational symmetry.

\noindent
In \cite{LW}, Li and Wei constructed an immersed  $\lambda$-hypersurface, which is homeomorphic  to $\mathbb S^n$.
For $0$-hypersurfaces (that is, self-shrinkers), Brendle \cite{B} proved that if $M^2$ is a properly embedded self-shrinker which is diffeomorphic to $\mathbb{R \times S}^{1} $, then $M$ is a cylinder. Brendle's result confirms the planar domain conjecture of T. Ilmanen \cite{I}. For  higher  dimensions, Kleene and M\o ller \cite{KM} obtained that the cylinder is  the only complete embedded, rotationally symmetric shrinkers of the topological type  $\mathbb{R \times S}^{n-1} $.
Kleene and M\o ller also proved that a smooth self-shrinkers of revolution, which is generated by rotating an entire graph around the $x$-axis must be the round cylinder $\mathbb{R \times S}^{n-1} $.
 \\
 In this paper, for $\lambda$-hypersurface ($\lambda\neq0$), motivated by \cite{D,DK,LW, S}, we construct nontrivial embedded $\lambda$-hypersurfaces,
  which are diffeomorphic to $\mathbb{R \times S}^{n-1} $. Hence,  the conclusion on the  planar domain conjecture for $\lambda$-hypersurfaces can
  not be expected.
  The examples that we construct in the theorem \ref{0316thm1.1} are also  rotational symmetric. Therefore,
 there exist  complete embedded cylindrical type $\lambda$-hypersurfaces besides the round cylinder.
 Moreover, we know from the theorem \ref{0316thm1.1} that there  also  exist entire graph $\lambda$-hypersurfaces besides the round cylinder.

\begin{theorem}\label{0316thm1.1}
Given an integer $n \ge 2$, there is a $c_1(n)>0$ only depending on $n$ such that, for $0<\lambda<c_1(n)$,
there exists a complete  embedded non-convex $\lambda$-hypersurface $\Sigma^n \subset \R^{n+1}$
which is diffeomorphic to $\mathbb{R \times S}^{n-1} $ and is not  isometric to the  round cylinder.
\end{theorem}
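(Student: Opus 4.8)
The plan is to impose rotational symmetry, reduce \eqref{0316eq1.1} to an ordinary differential equation for the generating curve, and then produce $\Sigma^n$ by a shooting argument. Writing $\R^{n+1}=\R\times\R^{n}$ and seeking $\Sigma^n$ invariant under the $O(n)$ action on the second factor, I represent it as the hypersurface swept out by a planar curve $(x(s),r(s))$ with $r>0$, rotated about the $x$-axis, where $s$ is arc length and $\theta(s)$ is the angle of the unit tangent with the $x$-axis. With the inward normal $\nu=(\sin\theta,-\cos\theta\,\omega)$, $\omega\in\mathbb S^{n-1}$, one computes $\langle X,\nu\rangle=x\sin\theta-r\cos\theta$ and $H=\theta'+(n-1)\frac{\cos\theta}{r}$, so \eqref{0316eq1.1} becomes the system
\begin{equation}\label{prop:sys}
\dot x=\cos\theta,\qquad \dot r=\sin\theta,\qquad \dot\theta=\lambda+r\cos\theta-x\sin\theta-\frac{(n-1)\cos\theta}{r},
\end{equation}
which in the region where the curve is a graph $r=r(x)$ is equivalent to
\begin{equation}\label{prop:graph}
r''=(1+r'^2)\Big[\frac{n-1}{r}-r+xr'-\lambda\sqrt{1+r'^2}\Big].
\end{equation}
The round cylinder is the constant solution $r\equiv r_0:=\frac{-\lambda+\sqrt{\lambda^2+4(n-1)}}{2}$, and \eqref{prop:sys} is invariant under $(x,\theta)\mapsto(-x,-\theta)$; hence it suffices to construct a non-constant even graph defined on all of $\R$.

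Next I would set up the shooting. Using the reflection symmetry I prescribe a neck at $x=0$ by $r(0)=a$, $r'(0)=0$, and study the forward orbit for $x>0$ as a function of the parameter $a>0$. Since $r''(0)=\frac{n-1}{a}-a-\lambda$, the neck is an interior minimum when $a<r_0$, so the curve opens outward. The strategy is to classify the global behaviour of the orbit into two failure modes, namely orbits that reach the rotation axis $r=0$ (capping off, which changes the topology to that of a sphere) and orbits that acquire a vertical tangent $\theta=\frac{\pi}{2}$ at a finite value of $x$ (so they cease to be graphs), together with the intermediate behaviour of orbits that persist as graphs for all $x$. The repulsive barrier $\frac{n-1}{r}\to+\infty$ as $r\to0^{+}$ in \eqref{prop:sys}, together with the drift term $-x\sin\theta$ controlling the sign of $\dot\theta$ near $\theta=\frac{\pi}{2}$, should make each failure mode stable under small perturbations of $a$; that is, the two failure sets are open. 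A continuity and connectedness argument across their common interface then yields a parameter $a^{\ast}$ whose orbit neither caps off nor develops a vertical tangent, hence stays an entire graph over $\R$, necessarily flaring to infinity (a bounded entire graph would have to converge to $r_0$, but the drift term makes the cylinder unstable, so no such solution exists besides the cylinder itself).

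The crux is twofold: to control the non-autonomous term $-x\sin\theta$ globally so that the interface orbit is genuinely defined and graphical for every finite $x$, and to show that $a^{\ast}$ can be chosen different from $r_0$, so that the resulting graph is non-constant. This is exactly where the hypothesis $0<\lambda<c_1(n)$ enters. At $\lambda=0$ the system \eqref{prop:sys} is the self-shrinker equation, for which \cite{KM} proves the cylinder is the unique entire graph of type $\R\times\mathbb S^{n-1}$; so one cannot perturb off the cylinder, and the new solution must arise as a separate branch produced by the term $-\lambda\sqrt{1+r'^2}$ in \eqref{prop:graph}. I would run a comparison with the $\lambda=0$ shooting analysis to exhibit, for all sufficiently small $\lambda>0$, neck values in $(0,r_0)$ realizing \emph{both} failure modes, forcing an interface value $a^{\ast}<r_0$ strictly below the cylinder. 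Establishing the openness of the two failure sets and the requisite barrier estimates that prevent the drift from forcing a finite-$x$ singularity is the main obstacle, in the spirit of the shooting constructions of \cite{D,DK}; this is where essentially all the analytic work lies.

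Finally, the geometric conclusions follow once $a^{\ast}$ is produced. Being an entire graph over the $x$-axis, the orbit generates an \emph{embedded} hypersurface $\Sigma^n$ diffeomorphic to $\R\times\mathbb S^{n-1}$; completeness holds because $x$ ranges over all of $\R$; the interior minimum of $r$ at the neck $x=0$ makes $\Sigma^n$ non-convex; and since $r$ is non-constant, $\Sigma^n$ is not isometric to the round cylinder. This would complete the proof of Theorem \ref{0316thm1.1}.
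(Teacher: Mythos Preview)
Your overall plan---rotational reduction and a shooting argument on the neck height $a$---is the paper's, but the dichotomy you set up is the wrong one, and this is a genuine gap rather than a missing detail. For $a<r_0$ the profile opens \emph{upward}: on the arc under consideration one has $\theta\in(0,\pi)$ and hence $\dot r=\sin\theta>0$, so $r$ is strictly increasing (see Proposition~\ref{231121lem3.3}). Consequently your Mode~A, ``the orbit reaches $r=0$ and caps off'', simply never occurs in this regime; the repulsive barrier at $r=0$ is irrelevant because the curve is moving away from the axis. The correct competing failure mode, and the one the paper actually uses, is that $\dot\theta$ vanishes before $\theta$ reaches $\pi/2$---equivalently, the graph $x=f(r)$ over the $r$-axis acquires an inflection. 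In the paper's terminology (Definition~\ref{231117def3.2}) this is Type~2; your Mode~B is Type~1; the desired entire-graph behaviour is Type~3. Types~1 and~2 are shown to be open in the initial-height parameter (Lemma~\ref{231119lem3.8}), and the solution is produced as the supremum $\delta_c$ of the interval of Type~2 initial heights.

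This also pinpoints where the hypothesis $0<\lambda<c_1(n)$ enters, which your proposal leaves vague. At $\lambda=0$ every $\delta\in(0,\sqrt{n-1})$ is Type~1 (Drugan--Kleene), so there is no Type~1/Type~2 interface at all---consistent with the Kleene--M{\o}ller rigidity you cite. The new input is Proposition~\ref{0529lem3.7}: when the sign of $\lambda$ is right (the paper works with the opposite normal and $\mu<0$), all sufficiently small initial heights are forced into Type~2, and the proof uses that sign in an essential way. Continuous dependence on $\lambda$ then keeps a fixed $\delta^\ast\in(0,\sqrt{n-1})$ in Type~1 for small $|\lambda|$ (Lemma~\ref{0530lem4.1}), which is what traps $\delta_c$ strictly between $0$ and $\delta^\ast<r_0$ and yields a Type~3 orbit distinct from the cylinder.

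A minor but telling point: your ODE system and your graph equation are sign-inconsistent---they give opposite values for $\dot\theta(0)=r''(0)$ at $(x,r,\theta)=(0,a,0)$. The error is in your formula for $H$: with your normal $\nu=(\sin\theta,-\cos\theta\,\omega)$ one has $H=-\dot\theta+(n-1)\tfrac{\cos\theta}{r}$, not $+\dot\theta$. Your graph equation is the correct one.
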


\begin{remark}
In \cite{Hei}, Heilman proved that, for $\lambda>0$, convex $n$-dimensional properly $\lambda$-hypersurfaces are isometric to  the standard sphere,
cylinder, the hyperplane.  Heilman's theorem generalized the rigidity result of Colding and Minicozzi \cite{CM}.
We need to notice that   examples constructed in the theorem \ref{0316thm1.1} are non-convex.
\end{remark}
\noindent
For $0$-hypersurfaces (that is, self-shrinkers), it is conjectured \cite{CS,DK,KM,R1} that there are no embedded rotationally
invariant self-shrinkers of topological
type $\mathbb{S}^1 \times \mathbb{S}^{n-1}$ in $\mathbb{R}^{n+1}$ other than Angenent's example constructed in \cite{A}.
But for a fixed $\lambda<0$ nearing zero, motivated by \cite{D,DK,LW, S},
we can construct two $\lambda$-hypersurfaces diffeomorphic to $\mathbb{S}^1 \times \mathbb{S}^{n-1}$, but they are not isometric to each other.
In fact, we prove

\begin{theorem}\label{0316thm1.2}
Given an integer $n \ge 2$, there is a $c_2(n)>0$ only depending on $n$ such that, for a fixed $\lambda$  satisfying
$-c_2(n)<\lambda<0$,
there exists two  embedded $\lambda$-hypersurfaces $\Sigma_{1}^n,\ \Sigma_{2}^n \subset \R^{n+1}$
which are diffeomorphic to $\mathbb{S}^{1} \times \mathbb{S}^{n-1} $, but they are not isometric to each other.
\end{theorem}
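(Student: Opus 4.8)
The plan is to realize both $\Sigma_1^n$ and $\Sigma_2^n$ as rotationally symmetric hypersurfaces and to study the generating curve by a shooting method, in the spirit of the self-shrinker constructions of \cite{DK} and of the proof of Theorem~\ref{0316thm1.1}. Write the hypersurface as the $SO(n)$-orbit of a profile curve $\gamma(s)=(x(s),r(s))$ in the half-plane $\{r>0\}$, parametrized by arclength with tangent angle $\theta$, so that $x'=\cos\theta$ and $r'=\sin\theta$. Computing the two principal curvatures---namely $\theta'$ coming from $\gamma$, together with one of multiplicity $n-1$ coming from the spherical fibre of radius $r$---reduces the defining equation \eqref{0316eq1.1} (for a fixed choice of orientation) to the first-order system
\begin{equation}
x'=\cos\theta,\qquad r'=\sin\theta,\qquad \theta'=\lambda+(n-1)\frac{\cos\theta}{r}+x\sin\theta-r\cos\theta .
\end{equation}
A short computation shows that this system is invariant under the reflection $(x,r,\theta)\mapsto(-x,r,-\theta)$ together with reversal of arclength, so I look for profile curves that are symmetric about the axis $\{x=0\}$. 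Such a curve meets that axis orthogonally, i.e.\ with horizontal tangent $\theta\in\{0,\pi\}$, at its lowest and highest points. I therefore prescribe initial data on the axis, starting at $(0,a)$ with $\theta=0$ and integrating into $x>0$, and treat the height $a>0$ as the shooting parameter.

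The cylinder appears here as the stationary configuration $\theta\equiv 0$, $r\equiv a_0$, where $a_0=a_0(\lambda)>0$ is the root of $\lambda+(n-1)/a-a=0$; note that the initial geodesic curvature $\theta'(0)=\lambda+(n-1)/a-a$ vanishes exactly at $a=a_0$ and changes sign across it, which is the seed of a genuine loop. Define the shooting function $\Phi(a)=\theta\bigl(s_*(a)\bigr)$, where $s_*(a)$ is the first time the trajectory returns to $\{x=0\}$; whenever $\Phi(a)=\pi$ the reflected arc closes up into a smooth simple closed curve lying in $\{r>0\}$, whose orbit is an embedded hypersurface diffeomorphic to $\mathbb S^1\times\mathbb S^{n-1}$. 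The analytic core is then to prove: (i) for $a$ in a suitable interval around $a_0$ the trajectory stays in a compact region of $\{r>0\}$ and does return to the axis, so that $\Phi$ is well-defined and continuous, which I would obtain by comparison with the explicit cylinder and sphere solutions and by controlling the monotonicity of $\theta$ and the sign of $r'$ along the trajectory; and (ii) the profile is embedded, which follows once $\theta$ is shown to increase monotonically from $0$ to its terminal value, forcing the arc to be a graph and hence simple.

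The decisive---and hardest---step is to show that for a fixed $\lambda$ with $-c_2(n)<\lambda<0$ the equation $\Phi(a)=\pi$ has at least two distinct roots $a_1<a_2$. I expect to produce these by proving that $\Phi(a)-\pi$ is non-monotone and changes sign on two disjoint subintervals of the admissible range: as $a\to a_0^{\pm}$ the nearly straight trajectory under- or over-shoots the level $\pi$, while the extra inward drift carried by the term with $\lambda<0$ bends a second family of trajectories back to the axis with $\theta=\pi$; the smallness of $|\lambda|$ is what keeps both crossings inside the embedded regime and pins down the threshold $c_2(n)$. Granting two roots $a_1\neq a_2$, the corresponding surfaces have different inner radii $\min r$ (equivalently different profile lengths and enclosed Gaussian volumes), which are genuine isometry invariants, so $\Sigma_1^n$ and $\Sigma_2^n$ are not isometric. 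The main obstacle is precisely this global control of the shooting map: ruling out escape to the axis or to infinity, guaranteeing a single transverse return with horizontal tangent, and---most delicately---establishing the double crossing of $\Phi$ uniformly for small negative $\lambda$, where one branch is expected to limit onto Angenent's self-shrinking torus as $\lambda\to 0^-$.
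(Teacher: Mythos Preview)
Your setup---the rotationally symmetric ansatz, the ODE system, the reflection symmetry, and the idea of shooting from $(0,a)$ with $\theta=0$---is exactly the paper's framework. The difficulty is in the ``decisive step,'' and there the proposal has a genuine gap.

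The shooting function $\Phi(a)=\theta(s_*(a))$, with $s_*(a)$ the first return time to $\{x=0\}$, is not globally defined on the parameter interval you need. For small $a$ (what the paper calls \emph{type 2}), one shows $\dot\theta$ vanishes before $\theta$ ever reaches $\pi/2$; hence $\dot x=\cos\theta>0$ throughout, the curve never returns to the axis, and $\Phi$ is undefined. For a \emph{type 3} height (the cylinder-like profiles constructed in Theorem~\ref{0316thm1.1}) the curve escapes with $x\to\infty$, so again no return. Your continuity/double-crossing argument therefore cannot be run across these regimes, and the heuristic ``as $a\to a_0^{\pm}$ the trajectory under- or over-shoots $\pi$'' does not describe the actual transition mechanism.

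The paper avoids a single shooting function altogether. It classifies initial heights $\delta\in(0,R_\lambda)$ by the order in which the events $\theta=\pi$, $\theta=\pi/2$, $\dot\theta=0$, $x=0$ first occur, calling the desired torus case (simultaneous $\theta=\pi$ and $x=0$) \emph{type 1.2}, and the neighboring over/undershoot cases \emph{type 1.1} (returns to $x=0$ with $\theta<\pi$) and \emph{type 1.3} ($\theta$ hits $\pi$ with $x>0$). The key facts are: types 1.1, 1.3, and 2 are open in $\delta$; $\delta$ near $0$ is type 2; a type 3 height has no type 1.1 heights nearby; and---this is where your correct intuition about the $\lambda\to0^-$ limit enters---for $\lambda=0$ Drugan--Kleene \cite{DK} exhibit both type 1.1 and type 1.3 heights, which persist for small negative $\lambda$ by continuous dependence. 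One then takes the infimum and supremum of the (nonempty, open) set of type 1.1 heights inside $(0,\delta_2^*)$; neither endpoint can be of type 1.1, 1.3, 2, or 3, so both are forced to be type 1.2, giving two distinct tori with different inner radii. Your embeddedness and non-isometry arguments are fine; what is missing is this discrete type classification replacing the ill-defined $\Phi$, together with the explicit appeal to the self-shrinker case to seed both types 1.1 and 1.3.
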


\begin{remark}
The $\lambda$-hypersurfaces we construct in theorem \ref{0316thm1.2} are rotationally invariant.
So one can not expect that  $\lambda$-hypersurfaces with  $\lambda<0$  share the common features
with self-shrinkers.\\
But for $\lambda>0$, by numerical evidence, we propose that  there are no embedded rotationally invariant
$\lambda$-hypersurfaces of topological type $\mathbb{S}^1 \times \mathbb{S}^{n-1}$ in $\mathbb{R}^{n+1}$
other than  the examples were constructed by Cheng and Wei in \cite{CW1}.\\
Moreover, it is worth note that a non-rotationally, embedded, genus one self-shrinker in $\mathbb{R}^3$
was constructed by Chu and Sun in \cite{CS}. \\
In \cite{CLW}, Cheng, Lai and Wei proved that, for  $n \ge 2$ and $ -\frac{2}{\sqrt{n+2}} < \lambda <0$,
there exists an  embedded convex $\lambda$-hypersurface $\Sigma^n \subset \R^{n+1}$
which is diffeomorphic to $\mathbb{S}^n$ and is not  isometric to the  standard sphere.
\end{remark}

\section{Preliminaries}\label{prel}

\noindent
Let $\mathbb{H} = \left\lbrace (x,r)\in\R^2: x\in\R, \  r > 0\right\rbrace$ and $SO(n)$ denote the special orthogonal group
and act on $\R^{n+1} = \left\lbrace (x,y): x\in\R, \ y\in\R^n\right\rbrace $ in the usual way.
We can identify the space of orbits $\R^{n+1}/SO(n)$
with the half plane $\overline{\mathbb{H}} = \left\lbrace (x,r)\in\R^2: x\in\R, \ r \ge 0\right\rbrace$
under the projection(see \cite{R})
	\[
	\Pi(x,y) = (x,|y|) = (x,r).
	\]
If a hypersurface $\Sigma$ is invariant under the action $SO(n)$, then the projection $\Pi(\Sigma)$ will
give us a profile curve in the half plane, which can be parametrized by Euclidean arc length
and write as $\gamma(s) = (x(s), r(s))$. Conversely, if we have a curve $\gamma(s) = (x(s), r(s)),\ \ s \in (a, b)$
parametrized by Euclidean arc length in the half plane,  we can reconstruct the hypersurface by
\begin{equation}\label{0316eq2.1}
\begin{aligned}
&{ X : (a, b) \times S^{n-1}(1) \to \R^{n+1} }, \\
&(s,\alpha) \mapsto (x(s), r(s)\alpha).
\end{aligned}
\end{equation}
Let
\begin{equation}\label{0316eq2.2}
\nu = (-\dot{r}, \dot{x}\,\alpha),
\end{equation}
where the dot denotes the  derivative with respect to arc length $s$.
A direct calculation shows that $\nu$ is a unit normal vector for the hypersurface.
Then we can calculate that the principal curvatures of the hypersurface (see \cite{CW1,DD}):
\begin{equation}\label{0426eq2.3}
\begin{aligned}
\kappa_i & = - \frac{\dot{x}}{r}, \ \ \ \ i=1,\,2,\,\dots,\,n-1, \\
\kappa_n &= \dot{x}\,\ddot{r}-\ddot{x}\,\dot{r}.
\end{aligned}
\end{equation}
Hence the mean curvature $H$ satisfies
\begin{equation}\label{0316eq2.3}
H =  \dot{x}\,\ddot{r}-\ddot{x}\,\dot{r} - (n-1)\frac{\dot{x}}{r}
\end{equation}
and  by (\ref{0316eq2.1}), (\ref{0316eq2.2}) and (\ref{0316eq2.3}),
equation (\ref{0316eq1.1}) reduces to (see also \cite{CW1,DK,R})
\begin{equation}\label{0316eq2.4}
\dot{x}\,\ddot{r} - \ddot{x}\,\dot{r} = (\frac{n-1}{r} - r )\dot{x} +   x\,\dot{r} + \lambda,
\end{equation}
where $(\dot{x})^2 + (\dot{r})^2 = 1$. \\
Let $\theta(s)$ denote the angle between the tangent vector of the profile curve
and $x$-axis.  (\ref{0316eq2.4}) can be written as the following system of differential equation:
\begin{equation}\label{0316eq2.5}
\left\lbrace
\begin{aligned}
\dot{x} &= \cos\theta,\\
\dot{r} &= \sin\theta, \\
\dot{\theta} &=  (\frac{n-1}{r} - r )\cos\theta +   x\,\sin\theta + \lambda.
\end{aligned}
\right.
\end{equation}
Let $P$ denote the projection from $\overline{\mathbb{H}} \times \R$ to $\overline{\mathbb{H}}$. Obviously, the curve $ P(\Gamma(s))$ generates a $\lambda$-hypersurface via (\ref{0316eq2.1}) provided $\Gamma(s)$ is a solution of (\ref{0316eq2.5}).
\\
Letting $(x_0, r_0, \theta_0)$ be a point in ${\mathbb{H}} \times \R$, by the  existence and uniqueness theorem of the solutions for first order ordinary differential equations, there is a unique solution $\Gamma(x_0, r_0, \theta_0)(s)$ to (\ref{0316eq2.5}) satisfying initial conditions $\Gamma(x_0, r_0, \theta_0)(0) = (x_0, r_0, \theta_0) $. Moreover, the solution depends smoothly on the initial conditions. For the convenience, we henceforth denote $\Gamma(0, \delta, 0)(s)$ by $\Gamma_\delta(s)$, $P(\Gamma_\delta(s))$ by $\gamma_\delta(s)$ and assume $\Gamma_\delta(s) = (x_\delta(s), r_\delta(s), \theta_\delta(s))$. If the curve $\gamma_\delta(s)$ is simple and exits the upper-half plane through infinity, then the curve $\gamma_\delta(s)$ will generate a cylindrical embedded $\lambda$-hypersurface via (\ref{0316eq2.1}). Similarly, a smooth simple closed  profile curve in $\mathbb{H}$ generates an embedded $\lambda$-hypersurface which is diffeomorphic to $\mathbb{S}^{1} \times \mathbb{S}^{n-1} $. This paper's main purpose  is to find such curves.\\
When the profile curve can be written in the form $(x, u(x))$, by (\ref{0316eq2.5}), the function $u(x)$ satisfies the differential equation
\begin{equation}\label{0528eq2.7}
\frac{u''}{1 + (u')^2} = x\,u' - u + \frac{n-1}{u} + \lambda\sqrt{1+(u')^2}.
\end{equation}
It is obvious that
$u = \frac{\lambda + \sqrt{\lambda^2 + 4(n - 1) }}{2}$
is a solution of  (\ref{0528eq2.7}). We will denote $\frac{\lambda + \sqrt{\lambda^2 + 4(n - 1) }}{2}$ by $R_\lambda$. 	
\\
When the profile curve can be written in the form $(f(r), r)$, by (\ref{0316eq2.5}), the function $f(r)$ satisfies the differential equation
\begin{equation}\label{0526eq2.8}
\frac{f''}{1 + (f')^2} =  ( r - \frac{n-1}{r}  )f' -f - \lambda\sqrt{1+(f')^2}.
\end{equation}
Differentiating  the above equation with respect to $r$, we obtain
\begin{equation}\label{0526eq2.9}
\frac{f'''}{1 + (f')^2} = \frac{2f'(f'')^2}{(1 + (f')^2)^2}+ ( r - \frac{n-1}{r}  )f'' + \frac{n-1}{r^2}f' -\frac{\lambda f'f''}{\sqrt{1 + (f')^2}}.
\end{equation}
Note that (\ref{0526eq2.8}) has a solution $f = -\lambda$, which corresponds to hyperplane, we will refer this constant solution as the plane.
We conclude this section with a lemma about the solutions of (\ref{0526eq2.8}),
which can be obtained easily by (\ref{0526eq2.8}), (\ref{0526eq2.9}) and the  uniqueness theorem for equation (\ref{0526eq2.8}).
\begin{lemma}\label{0527lem2.1}
Let  $f: (a,b) \to \mathbb{R}$ be a solution of $(\ref{0526eq2.8})$, $c \in (a,b)$ with $f''(c) = 0$. If $f'(c) = 0$ or $f'''(c) = 0$,
then $f=-\lambda$. Moreover, $f'(c)f'''(c)>0$ provided $f$ is not the plane.
\end{lemma}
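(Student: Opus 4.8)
The plan is to reduce every assertion to the uniqueness theorem for the second-order equation (\ref{0526eq2.8}), which I first rewrite in the explicit form
\begin{equation*}
f'' = \bigl(1+(f')^2\bigr)\Bigl[\bigl(r-\tfrac{n-1}{r}\bigr)f' - f - \lambda\sqrt{1+(f')^2}\Bigr].
\end{equation*}
Since the profile curve lives in the half-plane $\mathbb{H}$, every point of the domain has $r>0$, so the right-hand side is a smooth function of $(r,f,f')$ near $c$; hence a solution of (\ref{0526eq2.8}) is determined uniquely by the Cauchy data $(f(c),f'(c))$. The constant $f\equiv-\lambda$ is one such solution (the plane), and the strategy is to show that under the stated hypotheses $f$ shares its Cauchy data with the plane at $c$.

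First I would evaluate (\ref{0526eq2.9}) at $r=c$ using only $f''(c)=0$. Every term carrying a factor $f''(c)$ then drops out, leaving the single clean identity
\begin{equation*}
\frac{f'''(c)}{1+(f'(c))^2} = \frac{n-1}{c^2}\,f'(c),
\qquad\text{i.e.}\qquad
f'''(c) = \frac{(n-1)\bigl(1+(f'(c))^2\bigr)}{c^2}\,f'(c).
\end{equation*}
Because $n\ge2$ and $c>0$, the coefficient $(n-1)(1+(f'(c))^2)/c^2$ is strictly positive. This identity is the engine of the whole lemma: multiplying by $f'(c)$ gives
\begin{equation*}
f'(c)\,f'''(c) = \frac{(n-1)\bigl(1+(f'(c))^2\bigr)}{c^2}\,(f'(c))^2 \ge 0,
\end{equation*}
with equality if and only if $f'(c)=0$, and it simultaneously shows that $f'''(c)=0$ forces $f'(c)=0$.

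Next I would treat the common reduced case $f'(c)=f''(c)=0$, into which both hypotheses collapse (the assumption $f'(c)=0$ trivially, and the assumption $f'''(c)=0$ via the identity above). Substituting $f'(c)=0,\ f''(c)=0$ into (\ref{0526eq2.8}) at $r=c$ gives $0=-f(c)-\lambda$, i.e.\ $f(c)=-\lambda$. Thus $f$ and the plane $-\lambda$ have identical Cauchy data $(f(c),f'(c))=(-\lambda,0)$ at the regular point $c$, and uniqueness yields $f\equiv-\lambda$ on $(a,b)$. The \emph{moreover} statement then follows immediately: if $f$ is not the plane, the previous step rules out $f'(c)=0$ (which would force $f\equiv-\lambda$), so $f'(c)\neq0$ and the displayed inequality $f'(c)f'''(c)\ge0$ becomes strict.

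The computations are short, so the only point genuinely requiring care is the justification of uniqueness --- namely verifying that (\ref{0526eq2.8}) is a regular (non-singular) second-order ODE at $c$, which rests on $c\neq0$. This is guaranteed here because the curve is parametrized inside $\mathbb{H}$, where $r>0$; away from that region the $\tfrac{n-1}{r}$ term would be singular and the argument would break down.
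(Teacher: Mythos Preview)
Your proof is correct and follows exactly the route the paper indicates: evaluate (\ref{0526eq2.9}) at $c$ with $f''(c)=0$ to obtain $f'''(c)=\frac{(n-1)(1+(f'(c))^2)}{c^2}f'(c)$, reduce both hypotheses to $f'(c)=f''(c)=0$, read off $f(c)=-\lambda$ from (\ref{0526eq2.8}), and invoke uniqueness. The paper gives no further details beyond citing (\ref{0526eq2.8}), (\ref{0526eq2.9}) and uniqueness, so your write-up is precisely what the authors have in mind.
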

	
\section{A classification of $\delta$ in $(0,R_\lambda)$}
	
\noindent
In this section, we will give some descriptions of the behavior of the curve $\gamma_\delta$ for $0< \delta <  R_\lambda$,
and a classification of $\delta$. We begin by studying the behavior of general solutions for the equation (\ref{0526eq2.8}).
\begin{lemma}\label{0527lem3.1}
Let  $f: (a,b) \to \mathbb{R}$ be a solution of $(${\rm\ref{0526eq2.8}}$)$, $c \in (a,b)$, we have the following assertions:
\begin{enumerate}[$(1)$.]
\item If $f'(c) f''(c) <0$, then $f'(r)f''(r) <0$ for all $r \in (a, c]$,
\item If $f'(c)f''(c) > 0$, then $f'(r)f''(r) >0$ for all $r \in [c, b)$.
\end{enumerate}
\end{lemma}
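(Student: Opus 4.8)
The plan is to study the single scalar quantity $g(r) = f'(r)f''(r)$ and to show that its only sign changes occur from negative to positive as $r$ increases. Both assertions then follow at once: (1) says that once $g$ is negative it was negative for all smaller $r$, while (2) says that once $g$ is positive it remains positive for all larger $r$, and both are precisely the statement that $g$ never passes from $+$ to $-$ with increasing $r$. If $f=-\lambda$ is the plane, then $f'\equiv f''\equiv 0$, so $g\equiv 0$ and the hypotheses $f'(c)f''(c)\ne 0$ are never met; hence we may assume throughout that $f$ is not the plane.

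First I would differentiate, using only the product rule, to obtain
\[
g'=(f'')^2+f'f'''.
\]
The key step is to evaluate $g'$ at an arbitrary zero $c'$ of $g$, i.e. a point with $f'(c')f''(c')=0$, and to show $g'(c')>0$. There are two cases. If $f''(c')=0$, then since $f$ is not the plane Lemma \ref{0527lem2.1} gives $f'(c')f'''(c')>0$ (here $f'(c')\ne 0$, since otherwise $f'(c')=f''(c')=0$ would force $f=-\lambda$ by Lemma \ref{0527lem2.1}), so $g'(c')=0+f'(c')f'''(c')>0$. If instead $f'(c')=0$, then $f''(c')\ne 0$ (again $f'(c')=f''(c')=0$ is excluded by Lemma \ref{0527lem2.1}), so $g'(c')=(f''(c'))^2+0>0$. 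In either case $g'(c')>0$, so every zero of $g$ is isolated and $g$ strictly increases through it, crossing from negative to positive.

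With this in hand the two conclusions follow by a standard continuity argument. For (2), suppose $g(c)>0$ but $g(c'')\le 0$ for some $c''>c$; setting $c_\ast=\inf\{r>c:\ g(r)\le 0\}$ we get $c_\ast>c$, $g(c_\ast)=0$, and $g>0$ on $[c,c_\ast)$, yet $g'(c_\ast)>0$ forces $g<0$ immediately to the left of $c_\ast$, contradicting $g>0$ there. Hence $g>0$ on $[c,b)$. For (1) the symmetric argument with $c_\ast=\sup\{r<c:\ g(r)\ge 0\}$ gives $g<0$ on $(a,c]$.

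The only substantive input is Lemma \ref{0527lem2.1}, which pins down the sign of $f'''$ relative to $f'$ exactly at the zeros of $f''$; granting it, the rest is elementary. I therefore expect the main (minor) obstacle to be the bookkeeping of the degenerate subcases at a zero of $g$ — ruling out that $f'$ and $f''$ vanish simultaneously — which is precisely where the hypothesis that $f$ is not the plane, combined with Lemma \ref{0527lem2.1}, does its work.
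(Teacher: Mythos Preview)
Your proof is correct and uses the same key input as the paper (Lemma~\ref{0527lem2.1} applied at a first zero, combined with a continuity/contradiction argument). The paper tracks the signs of $f'$ and $f''$ separately in each sub-case rather than working with the single product $g=f'f''$, but this is a cosmetic difference; the underlying argument is the same.
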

\begin{proof}
We only prove the first part of the lemma. The second part can be proved similarly. If $f'(c) > 0$ and $ f''(c) <0$, by  the continuity of $f''(r)$,
we know that $f''(r) <0$ when $r$ near $c$. Hence, if $f''(r) \ge 0$ for some $r \in (a, c]$, then there must be a point $\bar{c} \in (a,c)$
such that $f''(\bar{c}) = 0$ and $f''(r) < 0$ for all $r \in (\bar{c}, c]$. The choice of $\bar{c}$ implies that $f'(\bar{c}) > f'(c) > 0$ and $f'''(\bar{c}) \le 0$,
which contradicts lemma \ref{0527lem2.1}.  Hence, we conclude  $f''(r) <0$ for all $r \in (a, c]$
which yields that $f'(r) > 0$ and $f'(r)f''(r) <0$ for all $r \in (a, c]$.
If $f'(c) < 0$ and $f''(c) >0$, the proof is similar to the above.
\end{proof}
 \noindent
 This lemma together with lemma \ref{0527lem2.1} show the following two corollaries.
\begin{corollary}\label{0528cor3.1}
Let $f$ be a solution of $(${\rm \ref{0526eq2.8}}$)$,  which is not the plane. If $f' = 0$ at a point, then $f$ is either strictly convex or strictly concave.
\end{corollary}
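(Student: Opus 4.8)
The plan is to deduce strict convexity or concavity from the single fact that $f''$ never vanishes on $(a,b)$: since $f''$ is continuous, a nowhere-vanishing $f''$ automatically has constant sign, which is precisely the dichotomy we want. So the whole proof reduces to ruling out any zero of $f''$.

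First I would pin down the sign of $f''$ at the point where $f'$ vanishes. Let $c_0\in(a,b)$ with $f'(c_0)=0$. If we also had $f''(c_0)=0$, then Lemma \ref{0527lem2.1} (with $c=c_0$, using $f'(c_0)=0$) would force $f=-\lambda$, i.e. $f$ would be the plane, contrary to hypothesis. Hence $f''(c_0)\neq0$, and after possibly relabeling I may assume $f''(c_0)>0$ (the case $f''(c_0)<0$ being symmetric).

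Next I would read off the sign of the product $f'f''$ just on either side of $c_0$. Because $f''(c_0)>0$ and $f''$ is continuous, $f''>0$ on a neighborhood of $c_0$; and because $f'(c_0)=0$ with $f'$ strictly increasing there, $f'$ is negative immediately to the left of $c_0$ and positive immediately to the right. Therefore $f'(r)f''(r)<0$ for $r$ slightly less than $c_0$ and $f'(r)f''(r)>0$ for $r$ slightly greater than $c_0$. I then propagate these one-sided sign conditions using Lemma \ref{0527lem3.1}: part $(1)$, applied at a point just to the left of $c_0$, gives $f'f''<0$ on all of $(a,c_0)$, while part $(2)$, applied at a point just to the right of $c_0$, gives $f'f''>0$ on all of $(c_0,b)$. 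On each of these open intervals the product $f'f''$ is strictly nonzero, so $f''$ vanishes at no point of $(a,c_0)\cup(c_0,b)$; together with $f''(c_0)>0$ this shows $f''\neq0$ on the whole of $(a,b)$, whence $f''>0$ throughout and $f$ is strictly convex.

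The main thing to be careful about is matching the two parts of Lemma \ref{0527lem3.1} to the correct directions of propagation: part $(1)$ carries the negative sign backward (leftward) and part $(2)$ carries the positive sign forward (rightward), and it is exactly this pairing that makes the two conclusions cover all of $(a,b)\setminus\{c_0\}$. Everything else is an elementary continuity and sign-tracking argument, so I do not expect a substantive obstacle beyond getting these orientations right.
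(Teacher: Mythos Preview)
Your proof is correct and follows exactly the route the paper indicates: the paper does not spell out a proof but simply notes that the corollary follows from Lemma~\ref{0527lem2.1} together with Lemma~\ref{0527lem3.1}, and your argument is precisely the natural way to combine these two lemmas (ruling out $f''(c_0)=0$ via Lemma~\ref{0527lem2.1}, then propagating the sign of $f'f''$ to each side via the two parts of Lemma~\ref{0527lem3.1}).
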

\begin{corollary}\label{0528cor3.2}
Let $f: (a, b) \to \mathbb{R}$ be a solution of $(${\rm \ref{0526eq2.8}}$)$, which is not the plane and at some point  $c \in (a,b)$, $f''(c) =0$.
If $f'(c) > 0$, then $f''(r) < 0$ for $r \in (a,c)$, $f''(r) > 0$ for $r \in (c,b)$; if $f'(c) < 0$, then $f''(r) > 0$
for $r \in (a,c)$, $f''(r) < 0$ for $r \in (c,b)$. Hence, there exists  at most one point in $(a, b)$ such that $f''=0$.
\end{corollary}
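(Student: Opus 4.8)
The plan is to leverage the local behavior of $f''$ near the inflection point $c$ provided by Lemma \ref{0527lem2.1}, and then propagate the resulting sign information to the whole interval $(a,b)$ using the product-sign statement in Lemma \ref{0527lem3.1}. Throughout I would treat the case $f'(c)>0$ in detail; the case $f'(c)<0$ is entirely symmetric (one simply reverses every inequality), and the final ``at most one zero'' assertion will drop out once the two-sided sign pattern has been established.

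First I would pin down the sign of $f''$ immediately on either side of $c$. Since $f$ is not the plane and $f''(c)=0$, Lemma \ref{0527lem2.1} gives $f'(c)f'''(c)>0$, so with $f'(c)>0$ we obtain $f'''(c)>0$. Because $f''(c)=0$ and $(f'')'(c)=f'''(c)>0$, the function $f''$ strictly increases through its zero at $c$: there is an $\varepsilon>0$ with $f''<0$ on $(c-\varepsilon,c)$ and $f''>0$ on $(c,c+\varepsilon)$. By continuity of $f'$ I may shrink $\varepsilon$ so that $f'>0$ on $(c-\varepsilon,c+\varepsilon)$ as well.

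Next I would globalize. Fix $c^{+}\in(c,c+\varepsilon)$; then $f'(c^{+})>0$ and $f''(c^{+})>0$, hence $f'(c^{+})f''(c^{+})>0$, and Lemma \ref{0527lem3.1}(2) yields $f'(r)f''(r)>0$ for all $r\in[c^{+},b)$. The crucial observation is that a strictly positive product forbids either factor from changing sign: if $f'$ or $f''$ vanished at some $r\in[c^{+},b)$, the product would vanish there too, a contradiction. Since both factors are positive at $c^{+}$, I conclude $f'>0$ and $f''>0$ on all of $[c^{+},b)$, which combined with $f''>0$ on $(c,c^{+}]$ gives $f''>0$ throughout $(c,b)$. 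Symmetrically, fixing $c^{-}\in(c-\varepsilon,c)$ gives $f'(c^{-})f''(c^{-})<0$, so Lemma \ref{0527lem3.1}(1) yields $f'f''<0$ on $(a,c^{-}]$; the same nonvanishing-product argument forces $f''<0$ there, and hence $f''<0$ on all of $(a,c)$. This is exactly the asserted sign pattern, and the case $f'(c)<0$ follows by reversing signs.

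Finally, the uniqueness of the zero of $f''$ follows immediately: if $c_1<c_2$ both satisfied $f''=0$, then since $f$ is not the plane Lemma \ref{0527lem2.1} forces $f'(c_1)\neq0$, and the sign pattern just proved (applied at $c_1$) makes $f''$ strictly one sign on all of $(c_1,b)$, which contains $c_2$, contradicting $f''(c_2)=0$. I expect the only genuinely delicate point to be the passage from the \emph{product} control in Lemma \ref{0527lem3.1} to the sign of $f''$ \emph{alone}; this is precisely where the strictness of the inequalities and the continuity argument that a nonvanishing product pins down both factors' signs must be invoked with care.
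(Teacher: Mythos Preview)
Your proof is correct and follows essentially the same route the paper indicates (the paper does not spell out a proof, merely noting that the corollary follows from Lemma~\ref{0527lem2.1} and Lemma~\ref{0527lem3.1}); you have filled in precisely the intended argument: use Lemma~\ref{0527lem2.1} to obtain the local sign change of $f''$ at $c$, then invoke Lemma~\ref{0527lem3.1} at a nearby point to propagate the product sign, and finally use continuity to separate the product into the sign of $f''$ alone.
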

\noindent
We also need the following lemma in order to fix some interval, as we shall see in the proof of proposition \ref{0529lem3.7}.
\begin{lemma}\label{230527lem3.2}
Let $f: (a, b) \to \mathbb{R}$ be a right maximally extended solution of $(\ref{0526eq2.8})$.
If $f' > 0$ and $f''> 0$ on $(a, b)$, then $b > R_\lambda$. If $f' < 0$ and $f''< 0$ on $(a, b)$, then $b > R_{-\lambda}$.
\end{lemma}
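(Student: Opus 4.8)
The plan is to prove the two statements at once by exploiting a symmetry of (\ref{0526eq2.8}). First I would reduce the second assertion to the first. If $f$ solves (\ref{0526eq2.8}) with parameter $\lambda$, then substituting $f=-\tilde f$ shows that $\tilde f:=-f$ solves (\ref{0526eq2.8}) with $\lambda$ replaced by $-\lambda$, while the hypotheses $f'<0,\ f''<0$ turn into $\tilde f'>0,\ \tilde f''>0$. Since $f$ and $\tilde f$ share the same right-maximal interval $(a,b)$, the second statement is precisely the first statement applied to $\tilde f$ with $-\lambda$ in place of $\lambda$ (so that $R_\lambda$ is replaced by $R_{-\lambda}$). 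Hence it suffices to treat the case $f'>0,\ f''>0$ and prove $b>R_\lambda$.

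Assume, for contradiction, that the right-maximal endpoint satisfies $0<b\le R_\lambda$. The basic monotonicity input is that $r\mapsto r-\frac{n-1}{r}$ is strictly increasing on $(0,\infty)$ and that $R_\lambda$ obeys $R_\lambda-\frac{n-1}{R_\lambda}=\lambda$; consequently $r-\frac{n-1}{r}<\lambda$ for every $r\in(0,R_\lambda)$, in particular throughout $(a,b)$. Inserting $f''>0$ into (\ref{0526eq2.8}) gives
\[
\Big(r-\tfrac{n-1}{r}\Big)f'-f-\lambda\sqrt{1+(f')^2}>0 ,
\]
and combining this with $f'>0$ and the previous inequality I would deduce the pointwise bound $f<\lambda\big(f'-\sqrt{1+(f')^2}\big)$. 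Since $f'-\sqrt{1+(f')^2}\in(-1,0)$, the right-hand side is bounded, so $f$ is bounded above; being increasing, $f$ then converges to a finite limit $L$ as $r\to b^-$ and is bounded on some $[r_0,b)$ with $r_0>0$. Because the right-hand side of (\ref{0526eq2.8}) is smooth for $r>0$, the standard continuation theorem shows that a finite endpoint $b>0$ with $f$ bounded forces $f'$ to be unbounded; as $f'$ is increasing, $f'\to+\infty$ as $r\to b^-$.

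It remains to contradict this blow-up. If $b<R_\lambda$, then $\lambda-\big(r-\frac{n-1}{r}\big)$ stays bounded below by the positive constant $\lambda-\big(b-\frac{n-1}{b}\big)$, and rearranging the displayed inequality (bounding $\sqrt{1+(f')^2}$ by $f'$ or by $f'+1$ according to the sign of $\lambda$) yields an upper bound on $f'$, contradicting $f'\to\infty$. The delicate case, and the main obstacle, is $b=R_\lambda$, where this estimate degenerates. Here I would argue geometrically through the arc-length system (\ref{0316eq2.5}): since $f$ is bounded, $\int_{r_0}^{R_\lambda}f'\,dr=L-f(r_0)<\infty$, and from $\sqrt{1+(f')^2}\le f'+1$ the total arc length $\int_{r_0}^{R_\lambda}\sqrt{1+(f')^2}\,dr$ is finite as well. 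Thus the corresponding solution $\Gamma(s)=(x(s),r(s),\theta(s))$ of (\ref{0316eq2.5}) (with $f'=\cot\theta$, so $f'\to+\infty$ means $\theta\to0^+$) reaches the point $(L,R_\lambda,0)$ at a finite arc length. But $\{(x,R_\lambda,0):x\in\R\}$ is an invariant line of (\ref{0316eq2.5}) (generating a cylinder), because there $\dot r=\sin\theta=0$ and $\dot\theta=(\frac{n-1}{R_\lambda}-R_\lambda)+\lambda=0$. By the uniqueness theorem for (\ref{0316eq2.5}), the trajectory through $(L,R_\lambda,0)$ is exactly this line, so $\Gamma$ would satisfy $r\equiv R_\lambda$, contradicting $r<R_\lambda$ on $(a,b)$. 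This rules out $b=R_\lambda$, and together with the previous case we conclude $b>R_\lambda$.
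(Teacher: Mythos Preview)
Your proof is correct and follows essentially the same strategy as the paper's: assume $b\le R_\lambda$, exploit that $r-\tfrac{n-1}{r}<\lambda$ on $(a,b)$ together with $f''>0$ in (\ref{0526eq2.8}) to bound $f$, and then rule out $b<R_\lambda$ by a blow-up contradiction and $b=R_\lambda$ by uniqueness at the cylinder solution. The only cosmetic differences are that you reduce the second assertion to the first via the symmetry $f\mapsto -f$, $\lambda\mapsto -\lambda$ (the paper just says the second part is similar), and that in the borderline case $b=R_\lambda$ you invoke uniqueness for the arc-length system (\ref{0316eq2.5}) at $(L,R_\lambda,0)$, whereas the paper switches to the graph $u(x)$ over the $x$-axis and uses uniqueness for (\ref{0528eq2.7}) at $u(x_*)=R_\lambda$, $u'(x_*)=0$; these are equivalent formulations of the same geometric fact.
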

\begin{proof}
We only consider the case of $\lambda <0$, the proof of the case of $\lambda \ge 0$ is similar.
Suppose $f' > 0$, $f''> 0$ on $(a, b)$ and $b \le R_\lambda$.
We have  $\lim_{r \rightarrow b}f(r) = \infty$ or $\lim_{r \rightarrow b}f'(r) = \infty$
since $f$ is a right maximally extended  solution,  that is, $f$ blows-up at $b$.
If $b < R_\lambda$, by (\ref{0526eq2.8}) we have
\begin{align}
\frac{f''}{1 + (f')^2}
&=  ( r - \frac{n-1}{r}  )f' -f - \lambda\sqrt{1+(f')^2}\nonumber\\
&<   ( r - \frac{n-1}{r}  )f' -f - \lambda (1+f') \nonumber\\
&=   ( r - \frac{n-1}{r} -\lambda )f' -f - \lambda,\nonumber
\end{align}
which implies that $f'' < 0$ as $r \rightarrow b$. This contradicts $f''>0$ on $(a,b)$.
\\
If  $b = R_\lambda$, the above inequality  implies that
\[f< ( r - \frac{n-1}{r} - \lambda )f' - \frac{f''}{1 + (f')^2} - \lambda\\
<   - \lambda\]
for $r \in (a,b)$. Therefore there exists $x_* \le -\lambda $ such that $\lim_{r \rightarrow b}f(r) = x_*$.
Near the point $(x_*, b)$, we write the curve $(f(r), r)$ as $(x,u(x))$,
where $u$ satisfies the differential equation (\ref{0528eq2.7}). Now, $u(x_*) = R_\lambda$ and $u'(x_*) = 0$,
and by the uniqueness of solutions for equation (\ref{0528eq2.7}), $u$ must be the constant function $u(x) = R_\lambda$.
This contradicts that $(f(r), r)$ agrees with $(x,u(x))$ near $(x_*,b)$. Hence, we obtain $b > R_\lambda$.
The second part of the lemma can be proved similarly.
\end{proof}
\noindent
Now we come back to consider the profile curves  $\gamma_\delta(s) = P(\Gamma_\delta(s))$.
We indicate some simple facts on $\Gamma_\delta(s)$:
\begin{itemize}
\item Since $\theta_\delta(0)=0$, we have $\theta_\delta \ne \pi/2,\pi$ in a small neighborhood of $s=0$.
\item From  $\dot{\theta}_\delta(0) = -(\delta^2 - \lambda\,\delta - (n-1))/\delta$, we know $\dot{\theta}_\delta(0) \ne 0$
and $\dot{\theta}_\delta \ne 0$ in a small neighborhood of $s=0$ provided $\delta \ne R_\lambda$.
In particular, $\dot{\theta}_\delta> 0$ in a small neighborhood of $s=0$ provided $0<\delta < R_\lambda$.
\item If $\delta \ne R_\lambda$, then $\theta_\delta \ne 0$ in a chosen  neighborhood of $s=0$ by the above item.
In particular, $\theta_\delta(s) > 0$ for small $s>0$ provided $0<\delta < R_\lambda$.
\item Because of $x_\delta(0) = 0$ and $\dot{x}_\delta(0)=\cos(\theta_\delta(0))=1$, we have  $x_\delta \ne 0$ in a chosen neighborhood of $s=0$.
In particular, $x_\delta(s) > 0$ for small $s>0$.
\end{itemize}
Hence in the case of $\delta \ne R_\lambda$, the following
definitions of $S(\delta)$, $s_1(\delta)$, $s_2(\delta)$, $s_3(\delta)$ and $s_4(\delta)$
are reasonable (see \cite{A,CW1}). Henceforth we assume $0<\delta < R_\lambda$.
\begin{definition}\label{231116def3.1}
For $0<\delta < R_\lambda$, we define:
\begin{enumerate}[$(1)$.]
\item Let $ S(\delta) > 0$ be the real number such that $\Gamma_\delta(s) : [0, S(\delta)) \to \mathbb{H} \times \R$
is the right maximally extended solution of the system $(\ref{0316eq2.5})$.
\item Let $s_1(\delta) > 0$ be the arc length of the first time, if any, at which
either  $\theta_\delta = 0$ or $\theta_\delta = \pi$. If these never happen, we take $s_1(\delta) = S(\delta)$.
\item Let $ s_2(\delta)>0$ be the arc length of the first time, if any, at which
$\dot{\theta}_\delta = 0$. If this never happen, we take ${s}_2(\delta) = S(\delta)$.
\item Let $ s_3(\delta)>0$ be the arc length of the first time, if any, at which
either $\theta_\delta = 0$ or $\theta_\delta = \pi/2$. If these never happen, we take ${s}_3(\delta) = S(\delta)$.
\item Let $ s_4(\delta)>0$ be the arc length of the first time, if any, at which
 $x_\delta = 0$ . If this never happen, we take ${s}_4(\delta) = S(\delta)$.
\end{enumerate}
\end{definition}
\noindent
For brevity, we will  denote $s_1(\delta)$, $s_2(\delta)$, $s_3(\delta)$, $s_4(\delta)$, $S(\delta)$
by  $s_1$, $s_2$, $s_3$, $s_4$, $S$, respectively.
By the definitions, we know
$$
\begin{aligned}
&0< \theta_\delta < \pi\  {\rm in} \  (0,s_1),\\
&\dot{\theta}_\delta > 0  \ {\rm in}\  (0,s_2),\\
& 0< \theta_\delta < \pi/2 \ {\rm in}  (0,s_3),\\
& x_\delta >0\ {\rm in}  (0,s_4).
\end{aligned}
   $$
Therefore $\dot{r}_\delta = \sin \theta_\delta > 0$ in $(0, s_1)$ and  $\lim_{s \to s_1}r_\delta(s)>0$
always exists although it might be $\infty$.
As we shall see in the following proposition 3.1, $\lim_{s \to s_1}x_\delta(s)$ and $\lim_{s \to s_1}\theta_\delta(s)$
also always exist due to $\dot{x}_\delta(s)$ and $\dot{\theta}_\delta(s)$ can not be zero for $s$ close to $s_1$ from below.\\
Since $\Gamma_\delta(s)$ is continuous on $s$,
we may write $\lim_{s \to s_1}x_\delta(s)$, $\lim_{s \to s_1}r_\delta(s)$, $\lim_{s \to s_1}\theta_\delta(s)$
into $x_\delta(s_1)$, $r_\delta(s_1)$, $\theta_\delta(s_1)$, respectively, without confusion.
We may also denote $r_\delta(s_1)$ by $b_\delta$.
\\
In addition, the curve $\gamma_\delta(s)$, $0<s<s_1$ can be written as a graph over $r$-axis since $\dot{r}_\delta>0$.
We denote this graph by $(f_\delta(r), r)$ where $\delta < r < r_\delta(s_1) $. Thus, the  function $f_\delta(r),\ \delta<r< r_\delta(s_1)$ is a maximally extended solution of (\ref{0526eq2.8}) by definition of $s_1$. \\
It is worth noting that, for $s_0$,
\begin{equation}\label{eq3.1}
\begin{aligned}
&\theta_\delta(s_0)=\pi/2, \ 0<\theta_\delta(s_0)<\pi/2, \  \pi/2<\theta_\delta(s_0)<\pi, \\
&\dot{\theta}_\delta(s_0)=0, \
\dot{\theta}_\delta(s_0)>0, \ \dot{\theta}_\delta(s_0)<0
\end{aligned}
\end{equation}
imply
\begin{equation}\label{eq3.2}
\begin{aligned}
&f'_\delta(r_\delta(s_0)) =0, \ f'_\delta(r_\delta(s_0)) >0, \ f'_\delta(r_\delta(s_0)) <0, \\
& f''_\delta(r_\delta(s_0)) =0,
 \ f''_\delta(r_\delta(s_0)) <0, \ f''_\delta(r_\delta(s_0)) >0
 \end{aligned}
\end{equation}
 respectively, and vice versa.
Thus $f'_\delta(r) >0$, $f''_\delta(r) <0$ for $r$ close to $\delta$ because of  $\theta_\delta(0) =0$ and $\dot{\theta}_\delta(0)>0$.
\\
In light of the  corollary \ref{0528cor3.1} and the corollary \ref{0528cor3.2},
we have the following proposition \ref{231121lem3.3}, which describes the behaviors of the curve
$\gamma_\delta$ for $0< \delta <  R_\lambda$ and we give definitions of types on  $\delta$ in $(0,R_\lambda)$.

\begin{proposition}\label{231121lem3.3}
For $0<\delta < R_\lambda$, the following holds.
\begin{enumerate}[$(1)$.]
\item $s_1<S$ yields $\theta_\delta(s_1) =0$ or $\theta_\delta(s_1) =\pi$, where the former implies $s_2<s_1$ and the latter implies $s_3<s_1$.
\item $s_3 \le s_4$ and $s_3=s_4$ if and only if   $s_1=s_3=s_4=S$.
\item If $s_3<s_1$, we have   $s_1\le s_2$, $\pi/2<\theta_\delta(s)<\pi$ in $(s_3,s_1)$ and
 $r_\delta(s_1)>R_{-\lambda}$.\\
 If $s_3\leq s_4<s_1$, we have $x_\delta(s)<0$ in $(s_4,s_1)$.\\
If $s_3<s_1\le s_4$,   $s_1<s_2$ holds.
\item If $s_2<s_1$, we have $s_1=s_3$, $\dot{\theta}_\delta(s) <0$ in $(s_2,s_1)$, $r_\delta(s_1)>R_{\lambda}$.
\item If $s_3\ge s_1$ and $s_2\ge s_1$, we conclude  $s_1=s_2=s_3=s_4=S = \infty$ and $r_\delta(s_1) = \infty$.			
\end{enumerate}
\end{proposition}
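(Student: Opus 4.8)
The plan is to translate every clause into the geometry of the graph $(f_\delta(r),r)$ over $(\delta,b_\delta)$, $b_\delta=r_\delta(s_1)$, which is available on $(0,s_1)$ since $\dot r_\delta=\sin\theta_\delta>0$ there, and to read signs off the dictionary $(\ref{eq3.1})$--$(\ref{eq3.2})$: the ranges $\theta_\delta\in(0,\tfrac\pi2)$, $\theta_\delta=\tfrac\pi2$, $\theta_\delta\in(\tfrac\pi2,\pi)$ correspond to $f_\delta'>0,=0,<0$, while $\dot\theta_\delta>0,=0,<0$ correspond to $f_\delta''<0,=0,>0$. Near $r=\delta$ one has $f_\delta'>0$ and $f_\delta''<0$, so $f_\delta$ is not the plane and Corollaries $\ref{0528cor3.1}$ and $\ref{0528cor3.2}$ apply; note also $f_\delta\to x_\delta(0)=0$ as $r\to\delta^+$. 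I record once and for all that $s_3\le s_1$, because reaching $\theta_\delta=\pi$ from $(0,\tfrac\pi2)$ requires an earlier crossing of $\tfrac\pi2$.

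Parts $(1)$ and $(2)$ are elementary. If $s_1<S$ then $\theta_\delta(s_1)\in\{0,\pi\}$ by continuity; the value $0$ makes $\theta_\delta$ rise from and return to $0$, so $\dot\theta_\delta$ vanishes in between and $s_2<s_1$, while the value $\pi$ forces a crossing of $\tfrac\pi2$ and $s_3<s_1$. For $(2)$, on $(0,s_3)$ we have $\dot x_\delta=\cos\theta_\delta>0$, hence $x_\delta>0$ and $s_3\le s_4$; if $s_3<S$ then $x_\delta(s_3)>0$ forces $s_4>s_3$, so $s_3=s_4$ can occur only when $s_3=S$, and then $0<\theta_\delta<\tfrac\pi2$ throughout $(0,S)$ gives at once $s_1=s_4=S$.

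For the first clause of $(3)$, $s_3<s_1$ and $\theta_\delta\ne0$ on $(0,s_1)$ force $\theta_\delta(s_3)=\tfrac\pi2$, i.e. $f_\delta'(r_3)=0$ with $r_3=r_\delta(s_3)$; Corollary $\ref{0528cor3.1}$ then makes $f_\delta$ strictly convex or concave, and $f_\delta''<0$ near $\delta$ pins it to strictly concave on $(\delta,b_\delta)$. Hence $f_\delta''<0$ throughout (so $s_1\le s_2$) and $f_\delta'<0$ on $(r_3,b_\delta)$ (so $\tfrac\pi2<\theta_\delta<\pi$ on $(s_3,s_1)$), and Lemma $\ref{230527lem3.2}$ applied to $f_\delta$ on $(r_3,b_\delta)$ gives $r_\delta(s_1)>R_{-\lambda}$. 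The middle clause is immediate, since $\dot x_\delta=\cos\theta_\delta<0$ on $(s_3,s_1)$ makes $x_\delta$ decrease through $x_\delta(s_4)=0$. Part $(4)$ is dual: by the contrapositive of the first clause of $(3)$ together with $s_3\le s_1$, $s_2<s_1$ yields $s_3=s_1$, so $0<\theta_\delta<\tfrac\pi2$ on $(0,s_1)$ and $f_\delta'>0$ on $(\delta,b_\delta)$; Corollary $\ref{0528cor3.2}$ at $r_2=r_\delta(s_2)$ gives $f_\delta''>0$ on $(r_2,b_\delta)$, i.e. $\dot\theta_\delta<0$ on $(s_2,s_1)$, and Lemma $\ref{230527lem3.2}$ on $(r_2,b_\delta)$ gives $r_\delta(s_1)>R_\lambda$.

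The hard part is the last clause of $(3)$, $s_3<s_1\le s_4\Rightarrow s_1<s_2$; the obstacle is that $s_1<s_2$ forces $s_1<S$, so I must exclude the curve approaching $\theta_\delta=\pi$ only asymptotically as $s\to S$. The hypothesis $s_1\le s_4$ enters as $f_\delta>0$ on $(\delta,b_\delta)$, which I combine with $f_\delta'<0$, $f_\delta''<0$ on $(r_3,b_\delta)$. If $b_\delta=\infty$, concavity keeps $f_\delta'\le f_\delta'(r_4)<0$ for some $r_4>r_3$ and drives $f_\delta\to-\infty$, contradicting $f_\delta>0$; hence $b_\delta<\infty$ and $f_\delta$ is bounded, so its total variation is finite ($f_\delta$ rises then falls between the finite limits $0$ and $L=\lim_{r\to b_\delta}f_\delta\ge0$) and $S=\int_\delta^{b_\delta}\sqrt{1+(f_\delta')^2}\,dr\le(b_\delta-\delta)+\int_\delta^{b_\delta}|f_\delta'|\,dr<\infty$. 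Then $\Gamma_\delta(s)$ converges as $s\to S^-$ to an interior point of $\mathbb{H}\times\R$ (its $r$--coordinate is $b_\delta\in(0,\infty)$), where the field of $(\ref{0316eq2.5})$ is smooth, so the solution extends past $S$, contradicting maximality; thus $s_1<S$, whence $\theta_\delta(s_1)=\pi$ by $(1)$, and if $s_1=s_2$ then $\dot\theta_\delta=0$ at $\theta_\delta=\pi$, which by $(\ref{0316eq2.5})$ forces $r_\delta(s_1)=R_{-\lambda}$ against $r_\delta(s_1)>R_{-\lambda}$. For $(5)$, the hypotheses give $s_1=s_3$ and $\dot\theta_\delta>0$ on $(0,s_1)$; excluding $s_1<S$ (which by $(1)$ would give $\theta_\delta(s_1)=0$ and hence $s_2<s_1$) leaves $0<\theta_\delta<\tfrac\pi2$ throughout, so $f_\delta'>0$, $f_\delta''<0$ and $s_1=s_2=s_3=s_4=S$, and a maximal strictly concave increasing $f_\delta$ cannot stop at a finite $b_\delta$ (there both $f_\delta$ and $f_\delta'$ stay finite, a regular extension point), so $r_\delta(s_1)=b_\delta=\infty$ and $S=\int_\delta^{\infty}\sqrt{1+(f_\delta')^2}\,dr\ge\int_\delta^\infty dr=\infty$.
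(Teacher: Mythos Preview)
Your proof is correct and follows essentially the same route as the paper: both arguments pass to the graph $f_\delta$ via the dictionary $(\ref{eq3.1})$--$(\ref{eq3.2})$, invoke Corollaries~\ref{0528cor3.1}--\ref{0528cor3.2} for the sign of $f_\delta''$, and apply Lemma~\ref{230527lem3.2} for the bounds $r_\delta(s_1)>R_{\pm\lambda}$. Two small remarks: in the last clause of $(3)$ the integral $\int_\delta^{b_\delta}\sqrt{1+(f_\delta')^2}\,dr$ equals $s_1$, not $S$ (you are implicitly arguing by contradiction from $s_1=S$, which is fine but should be said); and your exclusion of $s_1=s_2$ via the direct computation $\dot\theta_\delta=0$, $\theta_\delta=\pi\Rightarrow r_\delta=R_{-\lambda}$ is actually cleaner than the paper's appeal to uniqueness of the line solution $(-s,R_{-\lambda},\pi)$.
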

\begin{proof}

(1). Assume $s_1<S$, then $\theta_\delta(s_1)=0$ or $\theta_\delta(s_1)=\pi$ by the definition.
If $\theta_\delta(s_1)=0$, then $\dot{\theta}_\delta(\xi_1)=0$ for some $\xi_1$ in $(0,s_1)$
because of  $\theta_\delta(0)=0$. Hence, $s_2\le\xi_1<s_1$.
If $\theta_\delta(s_1)=\pi$, then ${\theta}_\delta(\xi_2)=\pi/2$ for some $\xi_2$ in $(0,s_1)$
from $\theta_\delta(0)=0$, which yields
$s_3\le\xi_2<s_1$.
\\
 (2). By the definition, $0<\theta_\delta<\pi/2$ in $(0,s_3)$.  $x_\delta(s)= \int_{0}^{s}\cos{\theta}_\delta(\xi){\rm d}\xi >0$ for $s$ in $(0,s_3)$
 and therefore $s_3\le s_4$. If $s_3 = s_4$, it suffices to prove $s_3 =S$ according to  $s_3 \le s_1$.
If  $s_3 = s_4 <S$,  $x_\delta(s_4) =0$  yields that $\dot{x}_\delta(s_0) =0$ for some $s_0$ in $(0,s_3)$
from  $x_\delta(0)=0$,
which contradicts  $\dot{x}_\delta =\cos\theta_\delta>0$ in $(0,s_3)$ by the definition.
\\	
(3). If  $s_3<s_1$, we have  $\theta_\delta(s_3) =\pi/2$ by the definition.
Hence $f'_\delta(r_\delta(s_3)) =0$ and  $f''_\delta(r) >0$ or $f''_\delta(r) <0$ for
 $r \in (\delta,r_\delta(s_1))$ by the corollary \ref{0528cor3.1}.
Since $f''_\delta(r) <0$ when $r$ closes to $\delta$,  we have $f''_\delta(r) <0$ in $(\delta,r_\delta(s_1))$. 	
It follows that $\dot{\theta}_\delta>0$ in $(0,s_1)$, i.e., $s_1\le s_2$. By combining with $\theta_\delta(s_3)=\pi/2$
we get $\pi/2<\theta_\delta(s)<\pi$ in $(s_3,s_1)$.
Hence $f'_\delta(r) <0$, $f''_\delta(r) <0$ for $r\in (r_\delta(s_3),r_\delta(s_1))$
and  $r_\delta(s_1)>R_{-\lambda}$ by the lemma \ref{230527lem3.2}.
\vskip1mm
\noindent
If $s_4<s_1$, $x_\delta(s_4)=0$ by the definition. Therefore $x_\delta(s)<0$ in $(s_4,s_1)$
since $\dot{x}_\delta = \cos\theta_\delta<0$ in $(s_3,s_1)$ and $s_3\le s_4$ from  (2).
\vskip1mm
\noindent
If $s_1\le s_4$,  we know that  $x_\delta>0$ in $(0,s_1)$ which follows that $x_\delta(s_1)$ is actually finite by the
monotone bounded convergence theorem.
Moreover, $\theta_\delta(s_1)$ and $r_\delta(s_1)$ are also finite by the same theorem.
We point out that the upper bound for $r_\delta$ in $(0,s_1)$ comes from $s_1\le s_2$ and the lower bound for $x_\delta$ in $(0,s_1)$.
Now $(x_\delta(s_1),r_\delta(s_1),\theta_\delta(s_1))$ is in ${\mathbb{H}} \times \mathbb{R}$ and then $s_1<S$.
\\
Secondly, we prove $s_1<s_2$ by a contradiction.
Supposing $s_1 =s_2<S$, then $\dot{\theta}_\delta(s_1)=0$ by the definition
and $\theta_\delta(s_1) =\pi$ by (1).
But according to the existence and uniqueness of the system (\ref{0316eq2.5}),
the solution $(-s,R_{-\lambda},\pi)$ preserves  $\dot{\theta}_\delta(s)=0$ and $\theta_\delta(s) =\pi$ at both  $s_1$ and $s_2$.
This is a  contradiction.
\\
(4). From  $s_2<s_1$, we have  $s_1=s_3$ by (3).
Since $s_2<s_1$ then $\dot{\theta}_\delta(s_2)=0$ by the definition.
Together with $s_1=s_3$, this follows that $f'_\delta(r_\delta(s_2)) >0$ and $f''_\delta(r_\delta(s_2)) =0$,
which give that $f''_\delta(r)>0$ in $(r_\delta(s_2),r_\delta(s_1))$ by the corollary \ref{0528cor3.2}.
Hence $\dot{\theta}_\delta(s) <0$ in $(s_2,s_1)$.
$s_1=s_3$ gives that $f'_\delta(r)>0$ in $(r_\delta(s_2),r_\delta(s_1))$ from (\ref{eq3.1}) and (\ref{eq3.2}).
By combing with $f''_\delta(r)>0$ in the same interval,  we obtain $r_\delta(s_1)>R_\lambda$ by the lemma \ref{230527lem3.2}.
\\
(5). If $s_3\ge s_1$ and $s_2\ge s_1$ then $s_1=s_2=s_3=s_4=S$ by (1) and (2).  Therefore $S=\infty$ holds by the definition of $S$.
Otherwise,  if  $S<\infty$,  we have $x_\delta(s_1)$, $r_\delta(s_1)>0$ and $\theta_\delta(s_1)$ are all finite.  The solution can  be extended again.
This is a contradiction. \\
For $T$ with $0<T<s_1$,  we know  $\dot{r}_\delta \ge \sin\theta_\delta(T)>0$ in $[T,s_1)$ by $s_1=s_2=s_3$.
This yields $r_\delta(s_1)=\infty$ since $s_1 =\infty$.
\end{proof}
\begin{definition}\label{231117def3.2}
For $0<\delta < R_\lambda$,
\begin{enumerate}[ $(1)$]
\item $\delta$ is called  type 1 if $s_3(\delta)<s_1(\delta)$.
\item $\delta$ is called type 2 if $s_2(\delta)<s_1(\delta)$\,$($see figure \ref{fig3.1}\subref{fig:3.1a}$)$.
\item $\delta$ is called  type 3 if $s_3(\delta)=s_2(\delta)=s_1(\delta)$\,$($see figure \ref{fig3.1}\subref{fig:3.1b}$)$.
\item $\delta$ is called  type 1.1 if $s_3(\delta)<s_1(\delta)$ and $s_4(\delta)<s_1(\delta)$\,$($see figure \ref{fig3.1}\subref{fig:3.1c}$)$.
\item $\delta$ is called  type 1.2 if $s_3(\delta)<s_1(\delta)$ and $s_4(\delta)=s_1(\delta)$\,$($see figure \ref{fig3.1}\subref{fig:3.1d}$)$.
\item $\delta$ is called  type 1.3 if $s_3(\delta)<s_1(\delta)$ and $s_4(\delta)>s_1(\delta)$\,$($see figure \ref{fig3.1}\subref{fig:3.1e}$)$.
\end{enumerate}
\end{definition}

	\noindent
	\begin{figure}[H]\centering
		\subfigure[] {	
			\includegraphics[width=0.11\linewidth]{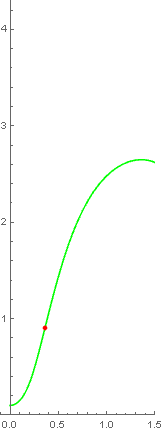}  \label{fig:3.1a}
		}\hfil
		\subfigure[] {
			\includegraphics[width=0.11\linewidth]{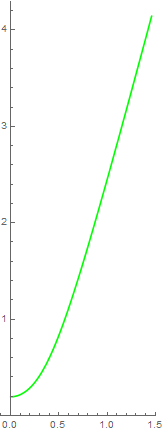}   \label{fig:3.1b}
		} \hfil
		\subfigure[] {
			\includegraphics[width=0.11\linewidth]{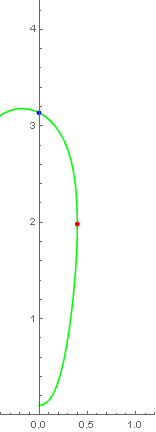}     \label{fig:3.1c}
		}  \hfil
	    \subfigure[] {
	        \includegraphics[width=0.11\linewidth]{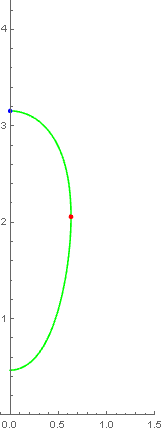}     \label{fig:3.1d}
        }  \hfil
        \subfigure[] {
            \includegraphics[width=0.11\linewidth]{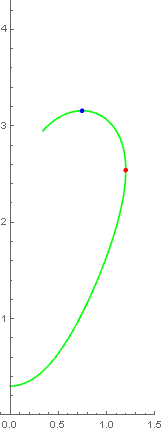}     \label{fig:3.1e}
        }
		\caption{\ Types of the profile curves.}
		\label{fig3.1}
	\end{figure}

\noindent
We conclude this section by noting that if $\delta$ is the  type 1.2, $x_\delta(s_1) =0$ and $\theta_\delta(s_1)=\pi$ by the definitions of $s_1$ and $s_4$.
By the  symmetry of system (\ref{0316eq2.5}), these two equalities
ensure that $\gamma_\delta(s)$, $0\le s\le 2s_1$ is a smooth simple closed profile curve,
which will generate a $\lambda$-hypersurface diffeomorphic to $\mathbb{S}^{1} \times \mathbb{S}^{n-1} $.
Also, if $\delta$ is  the  type 3, then $\gamma_\delta(s)$, $-\infty< s < \infty$ is simple
and exits upper-half plane through infinity. Thus it will generate
an embedded complete $\lambda$-hypersurface diffeomorphic to $\mathbb{R} \times \mathbb{S}^{n-1} $.

\section{Behavior of the curve $\gamma_\delta$ when perturbing $\delta$}

\noindent
In this section, the behavior of the curve $\gamma_\delta$ when disturbing $\delta$ for
each type or $\delta$ is near $0$ are studied.
In particular, for the situations of near $0$ and the type 3, the geometric  intuition is useful.
We introduce the following variables (see \cite{A,CW1}):
\begin{equation}\left\lbrace
	\begin{aligned}
	\xi(t) &= \frac{1}{\delta}x(\delta\,t),\nonumber\\
	\rho(t) &= \frac{1}{\delta}(r(\delta\,t) - \delta),\nonumber\\
	\alpha(t) &= \theta(\delta\,t),\nonumber
	\end{aligned}\right.
	\end{equation}
	where $\delta > 0$. From (\ref{0316eq2.5}), they satisfy
	\begin{equation}\label{0528eq3.1}
	\left\lbrace
	\begin{aligned}
	\xi' &= \cos \alpha,\\
	\rho' &= \sin \alpha,\\
	\alpha' &= \frac{n - 1}{1 + \rho}\xi' + \lambda\,\delta + \delta^2(\xi\,\rho' -(1+\rho)\xi').
	\end{aligned}
	\right.
\end{equation}
	Consider the system (\ref{0528eq3.1}) with initial conditions:
	\begin{equation}\label{0528eq3.2}
	\xi(0) = 0,\ \ \rho(0) = 0,\ \ \alpha(0) = 0.
	\end{equation}
For $\delta = 0$, this system can be solved explicitly, and one gets that $\alpha(t) = \arcsin \rho'(t)$ ,
where $\rho(t)$ is the inverse function of $$t = \int_{0}^{\rho}\frac{(1+\rho)^{n-1}}{\sqrt{(1 + \rho)^{2(n-1)} - 1}}{\rm d}\rho.$$
Since the solution of (\ref{0528eq3.1}) depends smoothly on the parameter $\delta$, we may conclude the following.
\begin{lemma}
For any $m \in \mathbb{N}^*$, there is a \ $T_m > 0$ and a \ $\delta_m > 0$ such that for all $0< \delta < \delta_m$,
one has $T_m\,\delta < s_2(\delta)$, and at $s = T_m\,\delta$,
 $\arctan m < \theta_\delta < \pi/2 $,  $x_\delta = O(\delta)$ and $r_\delta = \delta + O(\delta)$. Where $\mathbb{N}^*$ denotes
 the set of positive natural numbers.
\end{lemma}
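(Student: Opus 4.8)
The plan is to treat $\delta$ as a perturbation parameter and to transfer information about the explicitly solvable limit system at $\delta=0$ to small positive $\delta$ via the continuous dependence of solutions on $\delta$ that was already recorded in the text. All the assertions concern a fixed rescaled time $t=T_m$, so the argument is really an estimate at one point of phase space rather than a global analysis.

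First I would analyze the $\delta=0$ solution of (\ref{0528eq3.1})--(\ref{0528eq3.2}). Setting $\delta=0$ in the third equation gives $\alpha' = \frac{n-1}{1+\rho}\cos\alpha$, which is strictly positive while $0\le\alpha<\pi/2$ and $\rho>-1$; together with $\rho'=\sin\alpha\ge 0$ this forces $\alpha$ to increase monotonically. From the explicit relation $\sin\alpha = \rho' = \sqrt{1-(1+\rho)^{-2(n-1)}}$ one sees that $\alpha\to\pi/2$ as $t\to\infty$ while never attaining $\pi/2$. Hence for any $m$ there is a $T_m>0$ with $\arctan m < \alpha(T_m) < \pi/2$ for the limit solution. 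Moreover, on the compact interval $[0,T_m]$ the quantities $\xi(t)=\int_0^t\cos\alpha\,d\tau$ and $\rho(t)=\int_0^t\sin\alpha\,d\tau$ are bounded, and $\alpha'$ is bounded below by a positive constant $c_0$, since $\cos\alpha\ge\cos\alpha(T_m)>0$ and $1+\rho$ stays bounded there.

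Second, I would invoke the smooth dependence of solutions of (\ref{0528eq3.1}) on $\delta$. Because the limit trajectory stays in a fixed compact subset of phase space on $[0,T_m]$, continuous dependence guarantees that for $\delta$ small the perturbed quantities $\alpha_\delta,\alpha_\delta',\rho_\delta,\xi_\delta$ are uniformly close to their $\delta=0$ counterparts on $[0,T_m]$. Shrinking to some $\delta_m>0$, I can arrange simultaneously that $\alpha_\delta'>c_0/2>0$ throughout $[0,T_m]$ and that $\arctan m < \alpha_\delta(T_m) < \pi/2$. Translating back through $\theta_\delta(s)=\alpha_\delta(s/\delta)$, $x_\delta(s)=\delta\,\xi_\delta(s/\delta)$, $r_\delta(s)=\delta+\delta\,\rho_\delta(s/\delta)$ and $\dot\theta_\delta(s)=\delta^{-1}\alpha_\delta'(s/\delta)$, positivity of $\alpha_\delta'$ on $[0,T_m]$ gives $\dot\theta_\delta>0$ on $[0,T_m\delta]$, hence $s_2(\delta)>T_m\delta$ by the definition of $s_2$. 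At $s=T_m\delta$ the inequality $\arctan m<\theta_\delta<\pi/2$ is the angle estimate, while boundedness of $\xi_\delta(T_m)$ and $\rho_\delta(T_m)$ yields $x_\delta=\delta\,\xi_\delta(T_m)=O(\delta)$ and $r_\delta=\delta+\delta\,\rho_\delta(T_m)=\delta+O(\delta)$.

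The main obstacle is making the continuous dependence genuinely uniform on the whole interval $[0,T_m]$: one must verify that the perturbed trajectories do not leave the compact region where the right-hand side of (\ref{0528eq3.1}) is well-controlled (in particular $1+\rho$ bounded away from $0$ and $\alpha$ below $\pi/2$) before time $T_m$. This is handled by a standard continuation argument, shrinking $\delta_m$ if necessary so that all four quantities remain inside a fixed tube around the limit trajectory up to $t=T_m$; once this is secured, every stated estimate follows immediately.
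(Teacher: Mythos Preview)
Your proposal is correct and follows exactly the approach the paper takes. The paper's own argument consists only of the sentence ``Since the solution of (\ref{0528eq3.1}) depends smoothly on the parameter $\delta$, we may conclude the following,'' together with the explicit formula for the $\delta=0$ solution; your write-up simply fills in the details of that one-line sketch (monotone growth of $\alpha$ toward $\pi/2$ in the limit system, choice of $T_m$, uniform positivity of $\alpha'$ on $[0,T_m]$, and the translation back to the unrescaled variables).
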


\begin{remark}\label{0529rmk3.1}
For any fixed $m \in \mathbb{N}^*$, one can choose sufficiently small $\delta_m >0$
such that $x_\delta(T_m\,\delta) = O(\delta) <\frac{1}{m}$ and $r_\delta(T_m\,\delta) = \delta + O(\delta) <1$ for $0<\delta < \delta_m$.	
Consider the tangent line $l_{m,\delta} : r - r_\delta(T_m\,\delta)
= \tan \theta_\delta(T_m\,\delta)(x - x_\delta(T_m\,\delta))$ of $\gamma_\delta$ at $s = T_m\,\delta$.
Letting  $r = 1$ in the equation of $l_{m,\delta}$,
we get $$ x = \frac{1}{\tan \theta_\delta(T_m\,\delta)}(1 - O(\delta) - \delta) + O(\delta) < \frac{2}{m} $$ for $0<\delta < \delta_m$.
Therefore, if  $0< \delta < \delta_m$
and the profile curve $\gamma_\delta(s) = (x_\delta, r_\delta),\ s\in (0, b]$
satisfies  $\dot{r}_\delta > 0$, $r_\delta \le 1$ and $\dot{\theta}_\delta > 0$ on $(0, b]$,
one will have $x_\delta \le \frac{2}{m} $ on $(0, b]$.
\end{remark}

\noindent
Henceforth we choose $\delta_m$ as in the above remark.

\begin{lemma}\label{0529lem3.5}
For  $\lambda < 0$,	 there exists $\bar{\delta} > 0$ such that, for  $0< \delta < \bar{\delta}$,  $s_1(\delta)\le s_4(\delta)$.
\end{lemma}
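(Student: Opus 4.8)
The plan is to prove the contrapositive in disguise: show that for small $\delta$ the curve cannot be of type 1.1. Indeed, by Proposition \ref{231121lem3.3}(2) one always has $s_3\le s_4$, so $s_1\le s_4$ can fail only when $s_3\le s_4<s_1$, i.e.\ precisely when $s_3<s_1$ (type 1) together with $s_4<s_1$ (type 1.1). If $\delta$ is not of type 1, so $s_3\ge s_1$, then $s_4\ge s_3\ge s_1$ and we are done with no smallness assumption. Hence I would assume $s_3<s_1$ and aim to prove the stronger statement that $x_\delta>0$ on all of $(0,s_1]$, which forces $s_4>s_1$.

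The first step is a clean pointwise identity at the rightmost point $s_3$. Since $s_3<s_1$, Proposition \ref{231121lem3.3}(3) gives $s_1\le s_2$, so $\dot\theta_\delta>0$ on $(0,s_1)$ and in particular $\dot\theta_\delta(s_3)>0$; as $\theta_\delta(s_3)=\pi/2$, evaluating the third equation of $(\ref{0316eq2.5})$ at $s_3$ yields $\dot\theta_\delta(s_3)=x_\delta(s_3)+\lambda$, whence $x_\delta(s_3)>-\lambda=|\lambda|$. The second step is to show $r_\delta(s_3)\to\infty$ as $\delta\to0$. On $(0,s_3)$ we have $\dot\theta_\delta>0$, which by $(\ref{eq3.1})$--$(\ref{eq3.2})$ means $f_\delta''<0$, so the graph $f_\delta$ is concave there; comparing with its tangent line at $s=T_m\delta$ (where the preceding lemma gives $\theta_\delta\in(\arctan m,\pi/2)$, $r_\delta=\delta+O(\delta)$ and $x_\delta=O(\delta)$, and $\cot\theta_\delta<1/m$), exactly as in Remark \ref{0529rmk3.1}, gives $x_\delta(s_3)=f_\delta(r_\delta(s_3))\le O(\delta)+r_\delta(s_3)/m$. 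Combining this with $x_\delta(s_3)>|\lambda|$ forces $r_\delta(s_3)>m\bigl(|\lambda|-O(\delta)\bigr)$, which $\to\infty$ as $m\to\infty$, $\delta\to0$.

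The final step is to control the decrease of $x_\delta$ on $(s_3,s_1)$. There $\theta_\delta\in(\pi/2,\pi)$, $\dot\theta_\delta>0$ and $r_\delta\ge r_\delta(s_3)=:\rho$, and writing the third equation of $(\ref{0316eq2.5})$ as $\dot\theta_\delta=\bigl(r_\delta-\tfrac{n-1}{r_\delta}\bigr)|\cos\theta_\delta|+x_\delta\sin\theta_\delta+\lambda$, the total decrease is $x_\delta(s_3)-x_\delta(s_1)=\int_{\pi/2}^{\pi}\frac{|\cos\theta|}{\dot\theta_\delta}\,d\theta$. For $\rho$ large the curvature term $\bigl(\rho-\tfrac{n-1}{\rho}\bigr)|\cos\theta|$ dominates, the integrand is $O(1/\rho)$, and the decrease is $O(1/\rho)=o(1)$; hence $x_\delta(s_1)\ge x_\delta(s_3)-o(1)>|\lambda|-o(1)>0$ for $\delta$ small. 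This yields $x_\delta>0$ on $(0,s_1]$ (positive and increasing on $(0,s_3)$, positive and decreasing on $(s_3,s_1)$), and therefore $s_1<s_4$, as desired.

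The hard part will be the last estimate near $\theta=\pi/2$. There $|\cos\theta|\to0$, and because $\lambda<0$ the term $x_\delta\sin\theta_\delta+\lambda$ may be small or negative, so $\dot\theta_\delta$ is not obviously bounded below by a fixed multiple of $\rho|\cos\theta|$ and the integrand could a priori blow up. The resolution I expect is that just past $s_3$ one has $\dot\theta_\delta\approx\rho(\theta-\pi/2)+\bigl(x_\delta(s_3)-|\lambda|\bigr)$, so that $|\cos\theta|/\dot\theta_\delta\lesssim1/\rho$ \emph{uniformly}, even in the marginal regime $x_\delta(s_3)\approx|\lambda|$ where $\delta$ lies close to the type 3 separatrix; making this bound uniform over all such small $\delta$ is the delicate point, which I would handle by splitting the $\theta$-range at $|\cos\theta|\sim|\lambda|/\rho$ and treating the thin sliver near $\pi/2$ separately.
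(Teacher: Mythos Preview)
Your plan is genuinely different from the paper's and is correct through the second step: the identity $\dot\theta_\delta(s_3)=x_\delta(s_3)+\lambda>0$ indeed gives $x_\delta(s_3)>|\lambda|$, and the concavity/tangent-line comparison combined with the preceding lemma does force $r_\delta(s_3)\ge m\bigl(|\lambda|-O(\delta)\bigr)\to\infty$.  The gap is exactly where you say it is.  On the sliver $\theta\in[\pi/2,\pi/2+O(|\lambda|/\rho)]$ you need a lower bound on $\dot\theta_\delta$ that is \emph{uniform} in $\eta:=x_\delta(s_3)-|\lambda|$, and your heuristic $\dot\theta_\delta\approx\rho\psi+\eta$ is only the linearization at $s_3$.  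It can in fact be upgraded to a barrier inequality $\dot\theta_\delta\ge c\rho'(\theta-\pi/2)$ by estimating $d(\dot\theta_\delta)/d\theta$ and using the Step-2 bound $x_\delta(s_3)\lesssim\rho/m$ to absorb the $x\cos\theta$ contribution; this would give $|\cos\theta|/\dot\theta_\delta\le C/\rho'$ uniformly and close the argument.  But that is real additional work, and as written your Step 3 is a sketch, not a proof.

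The paper sidesteps this entirely by arguing by contradiction on a \emph{fixed} compact $r$-interval rather than on $(s_3,s_1)$.  Assuming a sequence $\bar\delta_m\to0$ of type 1.1, it first shows that the zero $t_m:=r_{\bar\delta_m}(s_4)$ must tend to $0$: otherwise, by the tangent-line bound of Remark~\ref{0529rmk3.1}, the concave graphs $f_m:=f_{\bar\delta_m}$ are pinched between $0$ and $2/m$ on a fixed interval, hence converge uniformly to $0$; picking points where $f_m,f_m'\to0$, equation (\ref{0526eq2.8}) forces $f_m''\to-\lambda>0$, contradicting $f_m''<0$.  Once $t_m\to0$, one has $f_m<0$, $f_m'<0$, $f_m''<0$ throughout $(\epsilon,\sqrt{n-1})$ for large $m$, and every term on the right of (\ref{0526eq2.8}) is positive there---again a contradiction.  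Thus the paper trades your quantitative curvature estimate near $\theta=\pi/2$ for two soft sign arguments, which is both shorter and avoids the uniformity issue you flagged.
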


\begin{proof}
Suppose that  the statement is not true.  For all $m \in \mathbb{N}^*$, there exist $0< \bar{\delta}_m < \min\left\lbrace \frac{1}{m}, \delta_m\right\rbrace $ such that $s_1(\bar{\delta}_m)> s_4(\bar{\delta}_m)$.
Putting $f_m(r) = f_{\bar{\delta}_m}(r)$  and $t_{m} = r_{\bar{\delta}_m}(s_4(\bar{\delta}_m))$.
The proposition \ref{231121lem3.3} implies that $\bar{\delta}_m$ is of the
 type 1 and
$$
\begin{aligned}
&s_2(\bar{\delta}_m)\ge s_1(\bar{\delta}_m), \ x_{\bar{\delta}_m}(s)>0 \   {\rm in} \  (0,s_4(\bar{\delta}_m)), \\
&x_{\bar{\delta}_m}(s)<0 \  {\rm  in}\  (s_4(\bar{\delta}_m), s_1(\bar{\delta}_m)),\\
&\pi/2<\theta_{\bar{\delta}_m}(s)<\pi \ {\rm in}  \ (s_4(\bar{\delta}_m), s_1(\bar{\delta}_m)),\
 {\rm and} \ \ r_{\bar{\delta}_m}(s_1({\bar{\delta}_m}))>R_{-\lambda}>\sqrt{n-1}.\\
 \end{aligned}
 $$
Hence, we conclude
 $$
\begin{aligned}
& f''_m(r)<0 \ {\rm in} \  (\bar{\delta}_m, b_{\bar{\delta}_m}), \\
 &f_m(r)>0 \ {\rm in }\  (\bar{\delta}_m, t_{m}), \ \  f_m(r)<0 \ {\rm  in} \   (t_{m}, b_{\bar{\delta}_m}),\\
 & f'_m(r)<0\ {\rm in}\  (t_{m}, b_{\bar{\delta}_m}) \ {\rm and}\  \ b_{\bar{\delta}_m}>\sqrt{n-1}.\\
 \end{aligned}
 $$
Next, we prove   $t_{m} \rightarrow 0$ as $m \rightarrow \infty$.  In fact,
if this is not true, then there exists $\bar{\epsilon} > 0$ such that for all $N > 0$,
there exists $m > N$ with $t_{m}> \bar{\epsilon}$.
Therefore, one can choose a subsequence $\left\lbrace m_k\right\rbrace $
of the natural number sequence such that $f_{m_k} (r) > 0$ on $(\bar{\delta}_{m_k}, \bar{\epsilon}]$.
Because of  $f''_{m_k} (r) < 0$ on $(\bar{\delta}_{m_k}, \bar{\epsilon}]$,
from  the remark \ref{0529rmk3.1},  we know that
$$f_{m_k} (r) < \frac{2}{m_k} \rightarrow 0\ \, {\rm as}\ \, k \to \infty$$
on $(\bar{\delta}_{m_k}, \min\left\lbrace1, \bar{\epsilon}\right\rbrace]$.
This implies that $f_{m_k} (r)$ converge to zero uniformly on a compact interval.
One can find a sequence $\left\lbrace \xi_k \right\rbrace $ in a compact interval
such that $$\lim_{k \to \infty}f_{m_k} (\xi_k) =0,\ \ \lim_{k \to \infty}f'_{m_k} (\xi_k) =0. $$
On the other hand, we know that the function $f_{m_k} (r)$ satisfies
\[\frac{f''_{m_k}}{1 + (f'_{m_k})^2} =  ( r - \frac{n-1}{r}  )f'_{m_k} -f_{m_k} - \lambda\sqrt{1+(f'_{m_k})^2}.\]
Letting $r = \xi_k$, we have
\[f''_{m_k}(\xi_k) \to -\lambda > 0\ \, {\rm as}\ \, k \to \infty, \]
which contradicts that $ f''_{m_k}(r) < 0$ on $(\bar{\delta}_{m_k}, \bar{\epsilon}]$.\\
Choosing a   positive $\epsilon < \sqrt{n-1}$ and large enough $ m > 0$, we have
\[f_m < 0,\ \ f'_m < 0, \ \ f''_m < 0\]
for  $r \in (\epsilon, \sqrt{n -1})$.
Since $f_m(r)$ satisfies
\[\frac{f''_{m}}{1 + (f'_{m})^2} =  ( r - \frac{n-1}{r}  )f'_{m} -f_{m} - \lambda\sqrt{1+(f'_{m})^2}, \]
we know that it is impossible. This finishes the proof of the lemma.
\end{proof}
	
\begin{proposition}\label{0529lem3.7}
For  $\lambda < 0$,	 there exists $\hat{\delta} > 0$ such that  for $0< \delta < \hat{\delta}$, $\delta$ is  the  type 2,
namely, $s_2(\delta) < s_1(\delta) $.
\end{proposition}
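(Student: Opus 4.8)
The plan is to argue by contradiction through a rescaling/compactness argument entirely parallel to the proof of Lemma \ref{0529lem3.5}. The first step is a reduction. For $0<\delta<R_\lambda$ the defining conditions of type 1, type 2 and type 3 are mutually exclusive and exhaustive: one always has $s_3\le s_1$, and if $s_2\ge s_1$ then either $s_3<s_1$, giving type 1, or $s_3=s_1$, which by Proposition \ref{231121lem3.3}(5) forces $s_1=s_2=s_3$, giving type 3. Hence it suffices to rule out type 1 and type 3 for all sufficiently small $\delta$. In either of these cases $\dot\theta_\delta>0$ on all of $(0,s_1)$ — for type 1 this is the inequality $s_1\le s_2$ from Proposition \ref{231121lem3.3}(3), and for type 3 it is immediate from $s_2=s_1=\infty$ — so that, passing to the graph $(f_\delta(r),r)$ on $(\delta,b_\delta)$, we get $f''_\delta<0$ throughout $(\delta,b_\delta)$. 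Moreover $b_\delta>1$: in the type 1 case $b_\delta=r_\delta(s_1)>R_{-\lambda}$ by Proposition \ref{231121lem3.3}(3), and since $\lambda<0$ one has $R_{-\lambda}>\sqrt{n-1}\ge1$, while in the type 3 case $b_\delta=\infty$. Thus ``not type 2'' is reduced to the single statement that $f_\delta$ is a positive, strictly concave solution of (\ref{0526eq2.8}) whose domain reaches past $r=1$.

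Next I would suppose the proposition fails, producing a sequence $\delta_k\downarrow0$ with each $\delta_k$ not of type 2, together with $m_k\to\infty$ and $\delta_k<\min\{\tfrac12,\delta_{m_k},\bar\delta,R_\lambda\}$, where $\bar\delta$ is from Lemma \ref{0529lem3.5}. Writing $f_k=f_{\delta_k}$, Lemma \ref{0529lem3.5} gives $s_1(\delta_k)\le s_4(\delta_k)$, so $x_{\delta_k}>0$ and hence $f_k>0$ on $(\delta_k,b_{\delta_k})$. On the sub-arc where $r_{\delta_k}\le1$ we have $\dot r_{\delta_k}>0$ and $\dot\theta_{\delta_k}>0$, so Remark \ref{0529rmk3.1} applies and yields $0<f_k(r)\le 2/m_k$ for all $r\in(\delta_k,1]$. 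In particular $f_k\to0$ uniformly on $[\tfrac12,1]$. The mean value theorem on $[\tfrac12,1]$ then produces $\xi_k\in(\tfrac12,1)$ with $f'_k(\xi_k)=2\big(f_k(1)-f_k(\tfrac12)\big)$, so that $|f'_k(\xi_k)|\le 4/m_k\to0$ and $0<f_k(\xi_k)\le 2/m_k\to0$, while $\xi_k$ remains in the fixed compact interval $[\tfrac12,1]$.

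Finally I would evaluate (\ref{0526eq2.8}) at $r=\xi_k$:
\[
\frac{f''_k(\xi_k)}{1+(f'_k(\xi_k))^2}=\Big(\xi_k-\frac{n-1}{\xi_k}\Big)f'_k(\xi_k)-f_k(\xi_k)-\lambda\sqrt{1+(f'_k(\xi_k))^2}.
\]
Because $\xi_k$ stays in $[\tfrac12,1]$ the coefficient $\xi_k-\tfrac{n-1}{\xi_k}$ is bounded, so together with $f'_k(\xi_k)\to0$ and $f_k(\xi_k)\to0$ the right-hand side converges to $-\lambda>0$ and is therefore positive for large $k$; but the left-hand side has the sign of $f''_k(\xi_k)<0$ and is negative for every $k$. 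This contradiction shows that no such sequence exists, so all sufficiently small $\delta$ are of type 2, which is the claimed $\hat\delta$.

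I expect the main obstacle to lie in the reduction step rather than in the final estimate. One must check carefully, using Proposition \ref{231121lem3.3}, that failing to be type 2 genuinely forces global strict concavity $f''_\delta<0$ and that the graph persists past $r=1$, so that Remark \ref{0529rmk3.1} and the mean value theorem can both be invoked on the fixed interval $[\tfrac12,1]$; the role of $\lambda<0$ enters twice, both through Lemma \ref{0529lem3.5} (to keep $f_k>0$) and through the sign of the limit $-\lambda$. Once this geometric reduction is secured, extracting the point $\xi_k$ of small height and slope and deriving $f''_k(\xi_k)\to-\lambda>0$ is a direct transcription of the computation already carried out in Lemma \ref{0529lem3.5}.
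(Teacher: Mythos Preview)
Your proof is correct and follows essentially the same route as the paper's. The paper's own argument is more compressed: it assumes by contradiction that $s_2(\bar\delta_m)\ge s_1(\bar\delta_m)$ along a sequence, invokes Lemma~\ref{0529lem3.5} to arrange $s_4\ge s_1$, reads off from Proposition~\ref{231121lem3.3} that $\dot\theta_{\bar\delta_m}>0$, $x_{\bar\delta_m}>0$ and $r_{\bar\delta_m}(s_1)>R_{-\lambda}$, and then simply says ``by the same assertion as in the proof of Lemma~\ref{0529lem3.5}'' to obtain the contradiction. Your explicit separation of the type~1/type~3 cases and your mean-value-theorem extraction of $\xi_k$ are just a spelling-out of what the paper leaves implicit.
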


\begin{proof}
Suppose that the assertion  is not true. We have, for all $m \in \mathbb{N}^*$,
there exist $0< \bar{\delta}_m < \min\left\lbrace \frac{1}{m}, \delta_m\right\rbrace $
such that $s_2(\bar{\delta}_m) \ge s_1(\bar{\delta}_m) $.  According to the  lemma \ref{0529lem3.5},
we may assume $ s_4(\bar{\delta}_m)\ge s_1(\bar{\delta}_m)$ without loss of generality.
From the  proposition \ref{231121lem3.3},
one obtain $\dot{\theta}_{\bar{\delta}_m}(s)>0$ in $(0,s_1(\bar{\delta}_m))$,
$x_{\bar{\delta}_m}(s)>0$ in $(0,s_1(\bar{\delta}_m))$ and $r_{\bar{\delta}_m}(s_1(\bar{\delta}_m))> R_{-\lambda} >0$.
Therefore,  we have $f''_{\bar{\delta}_m}(r) < 0$ in $(\bar{\delta}_m, b_{\bar{\delta}_m})$, $f_{\bar{\delta}_m}(r) > 0$
in $(\bar{\delta}_m, b_{\bar{\delta}_m})$ and $b_{\bar{\delta}_m}> R_{-\lambda} >0$.
By the  same  assertion as in  the proof of  the  lemma \ref{0529lem3.5}, we will get a contradiction.
\end{proof}

\vskip2mm
\noindent
Next we will  describe  the behavior of the curve $\gamma_\delta$ when perturbing $\delta$ of the type 3.
\begin{lemma}\label{231128lem4.3}
Assume  $\lambda < 0$ and that $\delta_0\in (0,R_\lambda)$ is   the type 3. We get  $x_{\delta_0}(s_1) = \infty$ and $r_{\delta_0}(s_1) = \infty$.
\end{lemma}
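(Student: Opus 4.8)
The plan is to split off the easy half and then attack the $x$-direction through a global integral identity, since the real difficulty is that the profile curve becomes vertical at infinity.

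First I would record the cheap consequences of the hypothesis. Being of type $3$ means $s_3=s_2=s_1$ (writing $\delta=\delta_0$ throughout), so in particular $s_3\ge s_1$ and $s_2\ge s_1$; hence Proposition \ref{231121lem3.3}(5) applies and yields $s_1=s_2=s_3=s_4=S=\infty$ together with $r_{\delta_0}(s_1)=\infty$. This already settles the claim about $r$. Moreover, on all of $(0,\infty)$ one has $0<\theta_{\delta_0}<\pi/2$, $\dot\theta_{\delta_0}>0$, and $\dot x_{\delta_0}=\cos\theta_{\delta_0}>0$, so $x_{\delta_0}$ is strictly increasing and thus $x_{\delta_0}(s)\uparrow L$ for some $L\in(0,\infty]$. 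It remains to prove $L=\infty$, and I argue by contradiction, assuming $L<\infty$.

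Next I pass to the graph $(f_{\delta_0}(r),r)$ on $(\delta_0,\infty)$. By (\ref{eq3.1})--(\ref{eq3.2}) the type-$3$ conditions translate into $f'_{\delta_0}>0$ and $f''_{\delta_0}<0$ on the whole interval, so $f_{\delta_0}$ is increasing and concave; being also bounded by $L$, it forces $f'_{\delta_0}\to0$, equivalently $\theta_{\delta_0}\to\pi/2$ and $\sin\theta_{\delta_0}\to1$. From here I extract the one pointwise fact I will need: for $r>\sqrt{n-1}$ the quantity $\left(r-\frac{n-1}{r}\right)f'_{\delta_0}$ is positive, while $f''_{\delta_0}<0$ makes the right-hand side of (\ref{0526eq2.8}) negative, so
\[\left(r-\frac{n-1}{r}\right)f'_{\delta_0}<f_{\delta_0}+\lambda\sqrt{1+(f'_{\delta_0})^2}.\]
Since the left-hand side is positive and $\lambda<0$, this gives $f_{\delta_0}>-\lambda\sqrt{1+(f'_{\delta_0})^2}>-\lambda$, i.e. $x_{\delta_0}(s)>-\lambda$ for all large $s$. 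This is the only place where $\lambda<0$ enters, and it is what will close the argument.

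The heart of the proof is an exact identity from the third equation of (\ref{0316eq2.5}). Integrating $\dot\theta_{\delta_0}=\left(\frac{n-1}{r}-r\right)\cos\theta_{\delta_0}+x_{\delta_0}\sin\theta_{\delta_0}+\lambda$ over $[0,S]$ and integrating $\int_0^S r_{\delta_0}\cos\theta_{\delta_0}\,ds=\int_0^S r_{\delta_0}\dot x_{\delta_0}\,ds$ by parts (using $\dot r_{\delta_0}=\sin\theta_{\delta_0}$ and $x_{\delta_0}(0)=0$) produces
\[2\int_0^S x_{\delta_0}\sin\theta_{\delta_0}\,ds=\theta_{\delta_0}(S)-\int_0^S\frac{n-1}{r_{\delta_0}}\cos\theta_{\delta_0}\,ds+r_{\delta_0}(S)\,x_{\delta_0}(S)-\lambda S.\]
Dividing by $S$ and sending $S\to\infty$, I evaluate each term by a Cesàro averaging argument: $x_{\delta_0}\sin\theta_{\delta_0}\to L$ gives $\frac1S\int_0^S x_{\delta_0}\sin\theta_{\delta_0}\,ds\to L$; the bounded term $\theta_{\delta_0}(S)/S$ and the average $\frac1S\int_0^S\frac{n-1}{r_{\delta_0}}\cos\theta_{\delta_0}\,ds$ both tend to $0$; and $\dot r_{\delta_0}=\sin\theta_{\delta_0}\to1$ gives $r_{\delta_0}(S)/S\to1$, hence $r_{\delta_0}(S)x_{\delta_0}(S)/S\to L$. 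The identity collapses to $2L=L-\lambda$, that is $L=-\lambda$. But $x_{\delta_0}$ increases strictly to $L=-\lambda$, so $x_{\delta_0}(s)<-\lambda$ for every $s$, contradicting $x_{\delta_0}(s)>-\lambda$ for large $s$ from the previous step. Therefore $L=\infty$, which is the assertion $x_{\delta_0}(s_1)=\infty$. I expect the main obstacle to be precisely the regime $\theta_{\delta_0}\to\pi/2$, where $\dot x_{\delta_0}=\cos\theta_{\delta_0}\to0$: the ODE only delivers upper bounds on $f'_{\delta_0}$ (hence on $\dot x_{\delta_0}$), so no pointwise estimate can force $x_{\delta_0}\to\infty$, and the work lies in replacing pointwise control by the global identity above and in justifying the limit $r_{\delta_0}(S)x_{\delta_0}(S)/S\to L$.
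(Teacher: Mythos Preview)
Your proof is correct and takes a genuinely different route from the paper's.

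The paper writes the curve as a graph $(x,u(x))$ over the $x$-axis, sets $b=x_{\delta_0}(s_1)$, and assumes $b<\infty$. Then $u'\to\infty$ near $b$, and from the differentiated equation they extract a third-order differential inequality of the form $\phi''>C\,\phi'\phi^2$ for $\phi=u'$. Comparing with an explicit subsolution $\phi_\epsilon(x)=M/\sqrt{b-\epsilon-x}$ they conclude $\phi\le\phi_\epsilon$, integrate once to see $u(b)<\infty$, and this contradicts $r_{\delta_0}(s_1)=\infty$.

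You instead stay with the graph $(f(r),r)$ and the first-order system. The concavity bound $f>-\lambda$ for $r>\sqrt{n-1}$ is cheap (and independent of the contradiction hypothesis), and the real work is the integral identity obtained by integrating the $\dot\theta$-equation and one integration by parts, followed by a Ces\`aro limit which pins down $L=-\lambda$. The contradiction is then immediate. This avoids the third-order comparison altogether and is arguably more elementary; the paper's approach, on the other hand, gives a quantitative blow-up picture of $u'$ near $b$ which your argument does not see. A minor presentational point: your sentence ``$f''_{\delta_0}<0$ makes the right-hand side of (\ref{0526eq2.8}) negative'' should read ``left-hand side'', though the conclusion you draw is correct; and while you introduce $f'_{\delta_0}\to0$ (hence $\theta\to\pi/2$, $\sin\theta\to1$) before Step~4, that step does not actually need it --- it is only used for the Ces\`aro limits in Step~5, where it is indeed essential.
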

\begin{proof}
Since  $\delta_0\in (0,R_\lambda)$ is   the type 3, $r_{\delta_0}(s_1)=\infty$ has proved in the proposition \ref{231121lem3.3}.
From  $s_1=s_3$,  we know that the curve $\gamma_{\delta_0}(s)$, $0<s<s_1$ can be written as a graph over $x$-axis.
We denote this graph by $(x, u(x))$ where $0 < x < x_{\delta_0}(s_1)$. The function $u(x),\ 0<x< x_{\delta_0}(s_1)$
is a right maximally extended solution of (\ref{0528eq2.7}) by the definition of $s_3$.
Furthermore, $u'>0,\ u''>0$ in $(0,x_{\delta_0}(s_1)) $ since $s_1=s_2=s_3$.
Put $x_{\delta_0}(s_1)=b$.
We also point out that $b>-\lambda$.  Otherwise $\dot{\theta}_{\delta_0}<0$ as $s$
approaches $ s_1$ due to
\[\dot{\theta}_{\delta_0} =  (\frac{n-1}{r_{\delta_0}} - r_{\delta_0} )\cos\theta_{\delta_0} +   x_{\delta_0}\,\sin\theta_{\delta_0} + \lambda, \]
$r_{\delta_0}(s_1)=\infty$ and $\lambda<0$.\\
If  $b < \infty$, we have  $\lim_{x \to b^-}u'(x) = \infty$.
For  $\tilde{\lambda}$ with $-\lambda < -\tilde{\lambda}<b$, there exists $x_1 \in (-\tilde{\lambda}, b)$
such that $(x+\tilde{\lambda})u' -u >0$ for $x \in (x_1, b)$ since
$\lim_{x\rightarrow b^-} ((x + \tilde{\lambda})u'(x) -u(x))= \infty$
when  $\lim_{x \to b^-}u'(x) = \infty$.
Furthermore,  there exists $x_2 \in (-\tilde{\lambda}, b)$ such that
$\sqrt{1+u'(x)^2} < \frac{\tilde{\lambda}}{\lambda}u'(x)$ for $x \in (x_2,b)$ due to $\lim_{x \to b^-}u'(x) =\infty$.\\
Therefore,  we have
\[
\dfrac{u''}{1+(u')^2} = xu' - u +\dfrac{n-1}{u} +\lambda\sqrt{1+(u')^2} > xu' - u +\dfrac{n-1}{u} +\tilde{\lambda}u' > \dfrac{n-1}{u}
\]for
$x> \max \left\lbrace x_1, x_2\right\rbrace $.
From  a  direct calculation, we obtain
\begin{equation}\begin{aligned}
\dfrac{u'''}{1+(u')^2} &= \dfrac{2u'(u'')^2}{(1+(u')^2)^2} - \dfrac{n-1}{u^2}u' +xu'' + \lambda\dfrac{u'u''}{\sqrt{1+(u')^2}}\\
&>2u'\dfrac{(n-1)^2}{u^2}  - \dfrac{n-1}{u^2}u' +xu'' + \tilde{\lambda}\dfrac{u'u''}{\sqrt{1+(u')^2}}\\
&> (x + \tilde{\lambda})u''\\
&>(\max\left\lbrace x_1,x_2\right\rbrace +\tilde{\lambda})u''
\end{aligned}\nonumber
\end{equation}
for $x> \max\left\lbrace x_1,x_2\right\rbrace $.
Thus, letting $\phi = u'$, one has
\[
\phi '' = u''' > (\max\left\lbrace x_1,x_2\right\rbrace +\tilde{\lambda})\phi ' \phi^2
\]
for $x> \max\left\lbrace x_1,x_2\right\rbrace $. \\
For small $\epsilon >0$, we consider the function
\[
\phi_\epsilon(x) = \dfrac{M}{\sqrt{b-\epsilon - x}}.
\]
By choose a sufficient large  $M>0$ such that $\phi(\max\left\lbrace x_1,x_2\right\rbrace ) \le \dfrac{M}{\sqrt{b-\max\left\lbrace x_1,x_2\right\rbrace }}$ and $\dfrac{3}{2M^2} \le \max\left\lbrace x_1,x_2\right\rbrace +\tilde{\lambda}$, we have
\[
\phi_\epsilon'' \le (\max\left\lbrace x_1,x_2\right\rbrace +\tilde{\lambda}) \phi_\epsilon' \phi_\epsilon^2,
\]
$$
\phi_\epsilon(\max\left\lbrace x_1,x_2\right\rbrace)
 > \phi(\max\left\lbrace x_1,x_2\right\rbrace).
$$	
If  $\phi_\epsilon - \phi $ is negative at some point in $ (\max\left\lbrace x_1,x_2\right\rbrace, b -\epsilon)$,
from  $\phi_\epsilon(\max\left\lbrace x_1,x_2\right\rbrace) > \phi(\max\left\lbrace x_1,x_2\right\rbrace)$
and $ \phi_\epsilon(b-\epsilon) = \infty$, we know that $\phi_\epsilon - \phi$
achieves a negative minimum at some point $x_0 \in(\max\left\lbrace x_1,x_2\right\rbrace, b -\epsilon) $.
By computing $(\phi_\epsilon - \phi)''$ at $x_0$, we conclude
\[
0 \le (\phi_\epsilon - \phi)'' \le (\max\left\lbrace x_1,x_2\right\rbrace +\tilde{\lambda})\phi_\epsilon'(x_0)(\phi_\epsilon(x_0) ^2-\phi(x_0)^2) <0.
\]
This is impossible. Therefore $\phi \le \phi_ \epsilon $.
By  integrating and taking $\epsilon \to 0$,  we see that $u(b)$ is finite, which contradicts  $r_{\delta_0}(s_1)=\infty$.
Hence, we get $x_{\delta_0}(s_1)=\infty$
\end{proof}

\begin{proposition}\label{231123lem4.4}
For $\lambda < 0$, if  $\delta_0\in (0, R_\lambda)$ is  the type 3,
there exists $\epsilon > 0$ such that  $s_4(\delta) \ge s_1(\delta)$
provided $\delta$ is   the type 1 and
$\delta \in (\delta_0 - \epsilon, \delta_0 + \epsilon)$.
	
\end{proposition}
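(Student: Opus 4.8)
The plan is to argue by contradiction, combining continuous dependence of $\Gamma_\delta$ on $\delta$ with the escape-to-infinity property of the type 3 curve established in Lemma \ref{231128lem4.3}. Suppose the claim fails; then there is a sequence $\delta_k \to \delta_0$ of type 1 points with $s_4(\delta_k) < s_1(\delta_k)$, i.e.\ each $\delta_k$ is of type 1.1. By Proposition \ref{231121lem3.3}(1),(3), for each $k$ one has $\theta_{\delta_k}(s_1) = \pi$, $\dot\theta_{\delta_k} > 0$ on $(0, s_1(\delta_k))$, $\pi/2 < \theta_{\delta_k} < \pi$ on $(s_3(\delta_k), s_1(\delta_k))$, and $r_{\delta_k}(s_1(\delta_k)) > R_{-\lambda} > \sqrt{n-1}$. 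Since $\dot x_{\delta_k} = \cos\theta_{\delta_k} > 0$ exactly on $(0, s_3(\delta_k))$, the value $X_k := x_{\delta_k}(s_3(\delta_k))$ is the maximal $x$-coordinate attained by $\gamma_{\delta_k}$.

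The first step is to show that the curves reach large $x$ and large $r$ before turning, namely
\[ X_k := x_{\delta_k}(s_3(\delta_k)) \to \infty, \qquad \rho_k := r_{\delta_k}(s_3(\delta_k)) \to \infty \quad (k \to \infty). \]
To prove this I would fix a large $T > 0$; since $\delta_0$ is of type 3, $\theta_{\delta_0} < \pi/2$ on $[0, T]$ with $\max_{[0,T]} \theta_{\delta_0} < \pi/2$, while by Lemma \ref{231128lem4.3} both $x_{\delta_0}(T)$ and $r_{\delta_0}(T)$ can be made as large as we wish once $T$ is large. Continuous dependence gives $\theta_{\delta_k} < \pi/2$ on $[0,T]$ for $k$ large, whence $s_3(\delta_k) > T$, and $x_{\delta_k}(T), r_{\delta_k}(T)$ are close to $x_{\delta_0}(T), r_{\delta_0}(T)$; since $x_{\delta_k}, r_{\delta_k}$ are increasing up to $s_3(\delta_k)$, a diagonal choice $T = T_k \to \infty$ yields $X_k, \rho_k \to \infty$.

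The second step is a turning estimate showing that $x$ can decrease by only a negligible amount on $(s_3(\delta_k), s_1(\delta_k))$. On this interval $r_{\delta_k} \ge \rho_k > \sqrt{n-1}$, so $r_{\delta_k} - \frac{n-1}{r_{\delta_k}} \ge \rho_k - \frac{n-1}{\rho_k} =: c_k \to \infty$, and for $\theta \in (\pi/2,\pi)$ the third equation of (\ref{0316eq2.5}) gives
\[ \dot\theta_{\delta_k} = \Big(r_{\delta_k} - \tfrac{n-1}{r_{\delta_k}}\Big)\,|\cos\theta_{\delta_k}| + x_{\delta_k}\sin\theta_{\delta_k} + \lambda \ge c_k\,|\cos\theta_{\delta_k}| + x_{\delta_k}\sin\theta_{\delta_k} + \lambda. \]
A bootstrap argument then shows $x_{\delta_k} \ge X_k/2$ on the whole interval: as long as $x_{\delta_k} \ge X_k/2$, using $|\cos\theta| + \sin\theta \ge 1$ on $[\pi/2,\pi]$ one gets $\dot\theta_{\delta_k} \ge \min(c_k, X_k/2) + \lambda =: D_k \to \infty$, so the arc length of the turn is at most $\int_{\pi/2}^{\pi} d\theta/\dot\theta_{\delta_k} \le (\pi/2)/D_k$, and hence $x$ changes by at most $(\pi/2)/D_k < X_k/2$ for large $k$; this prevents $x_{\delta_k}$ from ever reaching $X_k/2$. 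Consequently $x_{\delta_k}(s) \ge X_k/2 > 0$ for all $s \in (s_3(\delta_k), s_1(\delta_k))$, contradicting $s_4(\delta_k) < s_1(\delta_k)$, which forces $x_{\delta_k} = 0$ somewhere before $s_1(\delta_k)$. This contradiction proves the proposition, and since type 1 leaves only the cases $s_4 < s_1$, $s_4 = s_1$, $s_4 > s_1$, ruling out the first gives $s_4(\delta) \ge s_1(\delta)$.

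The main obstacle I anticipate is the first step: because $\delta_0$ is of type 3 the critical arc lengths $s_1(\delta_k)$ and $s_3(\delta_k)$ blow up as $k \to \infty$, so continuous dependence on a fixed compact interval does not by itself control the geometry near the receding turning point. The diagonal argument — choosing $T = T_k \to \infty$ slowly enough that uniform convergence on $[0, T_k]$ still holds while $x_{\delta_0}(T_k), r_{\delta_0}(T_k) \to \infty$ — is what makes $X_k, \rho_k \to \infty$ rigorous; once that is in hand the turning estimate is a routine application of the $\dot\theta$ equation together with $r_{\delta_k} - \frac{n-1}{r_{\delta_k}}$ being large.
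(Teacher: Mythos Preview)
Your proof is correct and follows essentially the same strategy as the paper's: both invoke Lemma~\ref{231128lem4.3} together with continuous dependence on initial data to guarantee that $(x_\delta(s_3),r_\delta(s_3))$ is far to the right and well above $\sqrt{n-1}$ for $\delta$ near $\delta_0$, and then use the equation for $\dot\theta_\delta$ to show that the turn from $\theta=\pi/2$ to $\theta=\pi$ happens in too little arc length for $x_\delta$ to reach~$0$. The only differences are packaging---the paper fixes a single $T$ with explicit thresholds and then argues on $[s_3,s_3+1]$ via a case split on whether $\theta$ reaches $3\pi/4$, whereas you pass to a sequence and run a bootstrap using $|\cos\theta|+\sin\theta\ge 1$; note incidentally that your claim $\theta_{\delta_k}(s_1)=\pi$ presumes $s_1<S$, which you have not verified, but you never use it.
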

\begin{proof}
By the proposition \ref{231121lem3.3} and the  lemma \ref{231128lem4.3}, one can choose $T$: $0<T<s_2=s_3$
such that $\dot{\theta}_{\delta_0}(s)>0$ in $[0,T]$ and
$$
\begin{aligned}
& \theta_{\delta_0}(T)<\pi/2, \\
&x_{\delta_0}(T) >\sqrt{2}(\pi + |\lambda|) +1,\\
& r_{\delta_0}(T) > \dfrac{\sqrt{2}(\pi +|\lambda|) + \sqrt{2(\pi + |\lambda|)^2 + 4(n-1)}}{2} +1.
\end{aligned}
$$
Since the solution of ordinary equations  continuously depend on the initial datas,  there exists $\epsilon > 0$ such that  the above  three inequalities hold
for $\delta \in (\delta_0 - \epsilon, \delta_0 + \epsilon)$.
Therefore $0<\theta_{\delta}(s)<\pi/2$ in $(0,T]$ and
$$
\begin{aligned}
&T<s_3(\delta),\\
&x_{\delta}(s_3(\delta)) >\sqrt{2}(\pi + |\lambda|) +1,\\
& r_{\delta}(s_3(\delta)) > \dfrac{\sqrt{2}(\pi +|\lambda|) + \sqrt{2(\pi + |\lambda|)^2 + 4(n-1)}}{2} +1.
\end{aligned}
$$
for $\delta \in (\delta_0 - \epsilon, \delta_0 + \epsilon)$.\\
If $\delta \in (\delta_0 - \epsilon, \delta_0 + \epsilon)$ is  the  type 1, we can prove $s_4 \ge s_1$.
In fact, for simple, we denote $s_1(\delta)$ by  $s_1$ and so on.
Suppose $s_4 < s_1$.  We know $x_\delta(s_4) =0$.
Since $\delta$ is the  type 1,  $s_1 >s_3$ and $\dot{\theta}_\delta(s) >0$ in $[0,s_1)$.
We further indicate that $s_4 -s_3 >1$. Otherwise
$ x_\delta(s_3)=\left| x_\delta(s_4) -x_\delta(s_3)\right|   = \left| \int_{s_3}^{s_4} \cos\theta_\delta(\xi){\rm d}\xi\right|  \le 1$.
Hence, $(0,s_1) \supset [s_3,s_3+1] $,  in which
$$
\begin{aligned}
&x_\delta(s) > \sqrt{2}(\pi + |\lambda|),\\
&r_\delta(s) > \dfrac{\sqrt{2}(\pi + |\lambda|) + \sqrt{2(\pi + |\lambda|)^2 + 4(n-1)}}{2}\\
\end{aligned}
$$
according to  the lower bounds for $x_\delta(s_3)$ and $r_\delta(s_3)$.
 Thus,  the above last inequality implies
 $$
  \dfrac{n-1}{r_\delta} -r_\delta < -\sqrt{2}(\pi + |\lambda|).
  $$
If  $\theta_\delta \neq  \dfrac{3\pi}{4}$ in $(s_3,s_3+\dfrac{1}{2})$, we can infer, for  $s\in (s_3,s_3+\dfrac{1}{2})$,
$$
\begin{aligned}
&\dfrac{\pi}{2} \le \theta_\delta(s) \le \dfrac{3\pi}{4},\\
 &\sin \theta_\delta(s) \ge \dfrac{\sqrt{2}}{2},\\
&\dot{\theta}_\delta(s) =  (\frac{n-1}{r_\delta} - r_\delta )\,\cos\theta_\delta +   x_\delta\,\sin\theta_\delta + \lambda \\
 &>  \sqrt{2}\,(\pi + |\lambda|)\, \dfrac{\sqrt{2}}{2} +\lambda= \pi.
\end{aligned}
$$
We obtain
  $$
  \theta_\delta(s_3+\dfrac{1}{2}) - \theta_\delta(s_3) = \int_{s_3}^{s_3+1/2}\dot{\theta}_\delta(s){\rm d}s > \dfrac{\pi}{2},
  $$
which contradicts $s_3+\dfrac{1}{2} < s_1$. \\
Thus,  there exists an $s$ in  $(s_3,s_3+\dfrac{1}{2})$ such that  $\theta_\delta = \dfrac{3\pi}{4}$. We have
 for $s \in [s_3+\dfrac{1}{2},s_3+1]$,
 $$
 \begin{aligned}
 &\dfrac{3\pi}{4} < \theta_\delta(s) < \pi,\\
 &\cos\theta_\delta(s) < -\dfrac{\sqrt{2}}{2},\\
 &\dot{\theta}_\delta(s)  > \sqrt{2}\,(\pi + |\lambda|)\,\dfrac{\sqrt{2}}{2} +\lambda = \pi.
\end{aligned}
$$
We derive
 $$
 \theta_\delta(s_3+1) - \theta_\delta(s_3+\dfrac{1}{2}) = \int_{s_3+1/2}^{s_3+1}\dot{\theta}_\delta(s){\rm d}s > \dfrac{\pi}{2},
 $$
 which also contradicts $s_3+1 < s_1$. Hence, we get $s_4\geq s_1$.
\end{proof}

\vskip2mm
\noindent
\begin{lemma}\label{231119lem3.8}
If $\delta_0$ in $(0,R_\lambda)$ is the  type 1, the type 2, the type1.1 or the type 1.3,
then there exists $\epsilon > 0$ such that  $\delta$ in  $(\delta_0 - \epsilon, \delta_0 + \epsilon)$
is  the same type.
\end{lemma}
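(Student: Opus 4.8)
The plan is to observe that each of the four types is cut out by strict inequalities among the stopping times $s_1,\dots,s_4$, and that such strict inequalities are stable under a small change of $\delta$ because $\Gamma_\delta$ depends smoothly on $\delta$: the functions $\theta_\delta,x_\delta,\dot\theta_\delta$ converge uniformly to those of $\Gamma_{\delta_0}$ on every compact subinterval of $[0,S(\delta_0))$. The one genuinely delicate point, which I regard as the main obstacle, is the common initial value $\theta_\delta(0)=0$: uniform convergence by itself cannot prevent a perturbed curve from sliding back down to $\theta=0$ near the origin, where $\theta_{\delta_0}$ is also small. I dispose of this with a separate initial estimate. Since $\dot\theta_\delta(0)=-(\delta^2-\lambda\delta-(n-1))/\delta>0$ for $\delta<R_\lambda$ and depends continuously on $\delta$, I can fix $\tau>0$ and a neighbourhood $U$ of $\delta_0$ with $\dot\theta_\delta>0$ on $[0,\tau]$ for every $\delta\in U$; hence $\theta_\delta$ is strictly increasing on $[0,\tau]$, so $\theta_\delta>0$ on $(0,\tau]$ and $x_\delta=\int_0^s\cos\theta_\delta(\xi)\,{\rm d}\xi>0$ on $(0,\tau]$. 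Away from the origin I then use uniform convergence on a compact interval $[\tau,\hat s]$ for a suitable witness time $\hat s$, chosen so that the monitored quantity already has a strict sign there at $\delta_0$.

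For type 1 ($s_3<s_1$), proposition \ref{231121lem3.3}$(3)$ gives $\dot\theta_{\delta_0}>0$ on $(0,s_1)$, so $\theta_{\delta_0}$ increases through $\pi/2$ at $s_3$ and stays in $(0,\pi)$ up to $s_1$. I pick $\hat s\in(s_3,s_1)$, so $\pi/2<\theta_{\delta_0}(\hat s)<\pi$. For $\delta\in U$ close to $\delta_0$, the initial estimate together with uniform convergence on $[\tau,\hat s]$ gives $\theta_\delta\in(0,\pi)$ on $(0,\hat s]$ and $\theta_\delta(\hat s)>\pi/2$; since $\theta_\delta(0)=0$, the intermediate value theorem forces $\theta_\delta$ to reach $\pi/2$ on $(0,\hat s)$ without first touching $0$ or $\pi$, i.e. $s_3(\delta)<\hat s<s_1(\delta)$, so $\delta$ is again type 1. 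Type 2 ($s_2<s_1$) runs identically with $\dot\theta_\delta$ as the monitored quantity: proposition \ref{231121lem3.3}$(4)$ gives $s_1=s_3$ and $\dot\theta_{\delta_0}<0$ on $(s_2,s_1)$, so I take $\hat s\in(s_2,s_1)$ with $\dot\theta_{\delta_0}(\hat s)<0$. Then $\dot\theta_\delta(\hat s)<0$ while $\dot\theta_\delta(0)>0$, whence $s_2(\delta)<\hat s$, and $\theta_\delta\in(0,\pi)$ on $(0,\hat s]$ gives $\hat s<s_1(\delta)$; thus $\delta$ stays type 2.

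For the refined types I additionally monitor $x_\delta$, using that type 1 is already known to be open. In type 1.1 ($s_3<s_1$ and $s_4<s_1$), proposition \ref{231121lem3.3}$(3)$ gives $x_{\delta_0}<0$ on $(s_4,s_1)$; choosing $\hat s\in(s_4,s_1)$ with $x_{\delta_0}(\hat s)<0$ yields $x_\delta(\hat s)<0$ for nearby $\delta$, while $x_\delta>0$ on $(0,\tau]$, so $s_4(\delta)<\hat s$, and as before $\hat s<s_1(\delta)$; combined with openness of type 1 this gives type 1.1. In type 1.3 ($s_3<s_1<s_4$) the roles reverse. Here $s_1\le s_4$, so proposition \ref{231121lem3.3}$(3)$ gives $s_1<s_2$ and $s_1<S$; hence $\dot\theta_{\delta_0}(s_1)>0$ and $\theta_{\delta_0}$ crosses $\pi$ transversally, which lets me bound $s_1(\delta)$ from above by $s_1(\delta_0)+\eta$ for a small $\eta$ with $\Gamma_{\delta_0}$ defined and $x_{\delta_0}>0$ on $[\tau,s_1(\delta_0)+\eta]$ (possible since $x_{\delta_0}(s_1)>0$ as $s_4>s_1$). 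Uniform convergence then yields $x_\delta>0$ on $(0,s_1(\delta_0)+\eta]\supset(0,s_1(\delta)]$, so $x_\delta$ never vanishes up to $s_1(\delta)$ and $s_4(\delta)>s_1(\delta)$; with openness of type 1 this gives type 1.3. I note that this scheme deliberately fails for type 1.2, where $s_4=s_1$ is an equality rather than a strict inequality, consistent with its absence from the statement.
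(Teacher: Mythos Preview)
Your proof is correct and follows essentially the same approach as the paper: both exploit continuous dependence of $\Gamma_\delta$ on $\delta$, combined with the structural information from proposition \ref{231121lem3.3}, to show that the defining strict inequalities persist. The paper's treatment is slightly slicker for type 1 in that it controls $\dot\theta_\delta>0$ on the entire compact interval $[0,T]$ at once (which automatically handles your ``near-origin'' obstacle, since $\dot\theta_{\delta_0}(0)>0$ strictly), rather than splitting into an initial segment $[0,\tau]$ and a far segment $[\tau,\hat s]$; but this is a cosmetic difference. Your treatment of types 1.1 and 1.3 is more detailed than the paper's, which simply asserts that the remaining cases are proved ``in the same way''; in particular, your transversality argument at $s_1(\delta_0)$ for type 1.3, used to pin down an upper bound on $s_1(\delta)$, is exactly the kind of verification the paper leaves implicit.
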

\begin{proof}
Assume that  $\delta_0$ is the  type 1.
We can choose  $T$ such that $s_3(\delta_0)<T <s_1(\delta_0)$,
$ \pi/2 <\theta_{\delta_0}(T) <\pi$ and  $\dot{\theta}_{\delta_0}(s)>0$ in $[0,T]$ from the  proposition \ref{231121lem3.3}.
Since solutions of ordinary equations  continuously  depend on the initial datas,
there exists $\epsilon > 0$ such that $ \pi/2 <\theta_{\delta}(T) <\pi$ and $\dot{\theta}_{\delta}(s) >0$ in $[0, T]$
for  $\delta \in (\delta_0 - \epsilon, \delta_0 + \epsilon)$.
Therefore $ 0 <\theta_{\delta}(s) <\pi$ in $(0,T]$ and  $T < s_1(\delta)$.
Moreover for $\delta \in (\delta_0 - \epsilon, \delta_0 + \epsilon)$, there exists an $s$  in $(0, T)$ at which $\theta_{\delta} = \pi/2$
from  the intermediate value theorem.
Thus for $\delta \in (\delta_0 - \epsilon, \delta_0 + \epsilon)$
one has $s_3(\delta)<T<s_1(\delta)$, namely, $\delta$ is the  type 1.
We actually proved that $s_3(\delta)<T <s_1(\delta)$ is equivalent to $ \pi/2 <\theta_{\delta}(T) <\pi$ and $\dot{\theta}_{\delta}(s)>0$ in $[0,T]$.
For the other cases,  we can prove them in the same way as in the type 1. 	
\end{proof}

\section{Proof of the theorems}
	
\noindent
From the proposition \ref{0529lem3.7}, we know  that $\delta$ nearing $0$ is the  type 2 for $\lambda <0$.
We need to show that there exists  a ${\delta^*}$ in $(0,R_\lambda)$ such that  it is not the  type 2.
\begin{lemma}\label{0530lem4.1}
Given $0< \delta^* < \sqrt{n-1}$, there exists $\bar{\lambda}(\delta^* ) < 0$ such that  $\delta^*$ is the  type 1
 for $\bar{\lambda}(\delta^* ) < \lambda \leq 0$. Moreover,
 one can choose $0< \delta_1^* < \sqrt{n-1}$
 such that $\delta_1^*$ is the  type 1.1 for $\bar{\lambda}(\delta_1^* ) < \lambda \leq 0$.
One can also choose $\delta_1^*< \delta_2^* < \sqrt{n-1}$ such that $\delta_2^*$ is  the  type 1.3 for $\bar{\lambda}(\delta_2^* ) < \lambda \leq0$.
\end{lemma}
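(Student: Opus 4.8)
The plan is to reduce the three assertions to the self-shrinker case $\lambda=0$ and then propagate them by continuity in $\lambda$. Since the solution $\Gamma_\delta$ of $(\ref{0316eq2.5})$ depends smoothly on the parameter $\lambda$ as well as on the initial data, the argument of Lemma \ref{231119lem3.8} applies verbatim with $\lambda$ playing the role of the varying parameter: being type 1 is equivalent to the existence of a time $T$ with $\pi/2<\theta_{\delta^*}(T)<\pi$ and $\dot\theta_{\delta^*}>0$ on $[0,T]$, and both are open conditions. Hence, once I know that $\delta^*$ is type 1 at $\lambda=0$, the same $T$ persists for all $\lambda\in(-\eta,0]$ for some $\eta>0$, and I set $\bar\lambda(\delta^*)=-\eta$; note that $\delta^*<R_\lambda$ holds automatically for $\lambda$ near $0$ because $R_\lambda\to\sqrt{n-1}>\delta^*$. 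The same openness reduces the last two statements to exhibiting, at $\lambda=0$, one $\delta_1^*$ of type 1.1 and one $\delta_2^*$ of type 1.3.

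So fix $\lambda=0$, where $\gamma_{\delta^*}$ is a self-shrinker profile. I would first rule out type 3: a type 3 curve, reflected across the $r$-axis (at $\lambda=0$ the system $(\ref{0316eq2.5})$ is invariant under $x\mapsto-x$, $\theta\mapsto\pi-\theta$), produces a complete embedded rotationally symmetric self-shrinker generated by an entire graph over the $x$-axis, which by Kleene--M\o ller \cite{KM} must be the round cylinder, contradicting that $\gamma_{\delta^*}$ is non-constant. The heart of the matter is excluding type 2. Writing the arc as a graph via $(\ref{0526eq2.8})$ with $\lambda=0$, type 1 means that $f_{\delta^*}'$ reaches $0$ while $f_{\delta^*}''<0$, whereas type 2 means that an inflection $f_{\delta^*}''(c)=0$ occurs first, necessarily with $f_{\delta^*}'(c)>0$; evaluating $(\ref{0526eq2.8})$ at $c$ gives $f_{\delta^*}(c)=(c-\tfrac{n-1}{c})f_{\delta^*}'(c)$, which forces $c>\sqrt{n-1}$. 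Thus type 2 is excluded exactly by proving that $\theta_{\delta^*}$ attains $\pi/2$ (equivalently $f_{\delta^*}'$ attains $0$) before $r_{\delta^*}$ passes the cylinder radius $\sqrt{n-1}$. This monotone-rise estimate is the main obstacle: the plan is to track $\dot\theta_{\delta^*}=(\tfrac{n-1}{r}-r)\cos\theta+x\sin\theta$, to use that $\dot\theta_{\delta^*}=x_{\delta^*}>0$ whenever $\theta_{\delta^*}=\pi/2$, and to combine Corollary \ref{0528cor3.2} (at most one inflection) with Lemma \ref{230527lem3.2}; the delicate regime is $\delta^*$ close to $\sqrt{n-1}$, where the curve lingers near the cylinder and its turning is slow.

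For the last two assertions I would anchor the analysis, still at $\lambda=0$, at Angenent's self-shrinking torus \cite{A}, whose profile is the type 1.2 closed curve obtained by shooting from some $\delta_A\in(0,\sqrt{n-1})$, for which $x=0$ occurs exactly when $\theta=\pi$, i.e. $s_4=s_1$. Since $s_4-s_1$ changes sign across $\delta_A$, the two sides of $\delta_A$ furnish one parameter with $s_4<s_1$ (type 1.1) and one with $s_4>s_1$ (type 1.3); the geometry of the arch shows that lowering the starting height makes the descending branch cross the $r$-axis before $\theta$ reaches $\pi$, so I take $\delta_1^*<\delta_A$ of type 1.1 and $\delta_2^*>\delta_A$ (still inside the type 1 range) of type 1.3, giving $\delta_1^*<\delta_2^*$ as required. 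Alternatively one produces these directly, letting $\delta\to0$ to force $s_4<s_1$ and taking $\delta$ just below the type 1/type 2 threshold to force $s_4>s_1$. Openness in $\delta$ (Lemma \ref{231119lem3.8}) guarantees that these are genuine type 1.1 and type 1.3 parameters, and transporting them through the continuity in $\lambda$ of the first paragraph finishes the plan. The only genuine difficulty is the monotone-rise estimate of the second paragraph, which is precisely what confines $\lambda$ to a neighborhood of $0$ depending on $\delta^*$.
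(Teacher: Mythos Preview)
Your overall strategy---reduce to $\lambda=0$ and then propagate by continuous dependence on $\lambda$, exactly as in Lemma~\ref{231119lem3.8}---is precisely the paper's approach. The paper, however, dispatches the entire $\lambda=0$ case by citation: Drugan--Kleene~\cite{DK} prove that every $\delta^*\in(0,\sqrt{n-1})$ is of type~1 for the self-shrinker equation, and moreover that $\delta^*$ near $0$ is of type~1.1 while $\delta^*$ near $\sqrt{n-1}$ from below is of type~1.3. With those facts in hand, the openness argument of your first paragraph finishes the proof immediately.

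Your second and third paragraphs attempt instead to reprove the $\lambda=0$ content, and there the proposal has genuine gaps. The exclusion of type~3 via Kleene--M\o ller~\cite{KM} is clean, but the exclusion of type~2---your ``monotone-rise estimate'' that $\theta_{\delta^*}$ reaches $\pi/2$ before $r_{\delta^*}$ crosses $\sqrt{n-1}$---is left as an acknowledged obstacle rather than an argument; your observation that any inflection must occur at $r>\sqrt{n-1}$ is correct but gives only a sufficient criterion, and the ``delicate regime'' near $\sqrt{n-1}$ is exactly where the work lies. (Your closing sentence also conflates two separate issues: the estimate is needed only once, at $\lambda=0$, while the dependence of $\bar\lambda(\delta^*)$ on $\delta^*$ comes purely from the openness step.) In the third paragraph, the claim that $s_4-s_1$ changes sign as $\delta$ crosses Angenent's value $\delta_A$ is plausible but unproven---type~1.2 being a non-open condition does not by itself force the sign to flip, and the ``geometry of the arch'' is a heuristic, not an argument. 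The paper sidesteps all of this by invoking~\cite{DK} directly; if you wish to avoid that citation you must supply the missing analysis, which is a nontrivial portion of the Drugan--Kleene paper.
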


\begin{proof}
If $\lambda = 0$, Drugan and Kleene \cite{DK} showed that $\delta^*$ is  the  type 1
for $0< \delta^* < \sqrt{n-1}$. In particular, $\delta^*$ is the type 1.1 for  $\delta^*$ close to $0$,
$\delta^*$ is  the type 1.3 for $\delta^*$ close to $\sqrt{n-1}$ from below.
Since the solution of (\ref{0316eq2.5}) depends smoothly on the parameter $\lambda$,
the proof of this lemma is similar to the proof of lemma \ref{231119lem3.8}.		
\end{proof}
\vskip2mm
\noindent
Note that $\bar{\lambda}(\delta^*)$ only depends on $n$ and $\delta^*$.
Now we can prove the  theorem \ref{0316thm1.1} and  the  theorem \ref{0316thm1.2}.

\vskip1mm
\noindent {\it Proof of Theorem \ref{0316thm1.1}.}
Given $\mu$ in $(\bar{\lambda}(\sqrt{n-1}/2),0)$ where $\bar{\lambda}(\sqrt{n-1}/2)$ is as in the lemma \ref{0530lem4.1},
we consider the the set
\[
\left\lbrace \tilde{\delta} \in (0, R_\mu): \forall \delta \in (0,\tilde{\delta}),\ \delta\ {\rm is\ the \ type\ 2 } \right\rbrace
\]
which is non-empty by the  proposition \ref{0529lem3.7}.
Following  the argument in \cite{CW1} (cf. \cite{A}). We define $\delta_{c} $ as the supremum of this set:
\[
\delta_{c} = \sup\left\lbrace \tilde{\delta} \in (0, R_\mu): \forall \delta \in (0,\tilde{\delta}),\  \delta\ {\rm is\  the \ type\ 2 } \right\rbrace.
\]
Then $\delta_{c} \le R_{\mu}$ and $\delta_{c}$ is the largest real number in $(0,R_\mu]$ such that $\delta$ is the  type 2
for all $\delta \in (0,\delta_c)$.
But the  lemma \ref{0530lem4.1} shows that $\delta_{c} < R_{\mu}$ for $\bar{\lambda}(\sqrt{n-1}/2) <\mu < 0$ and then, by the definition of $\delta_{c}$, we can further utilize the  lemma \ref{231119lem3.8} to obtain that $\delta_{c}$ is neither the type 1 nor  the type 2.
Hence,
$\delta_{c}$ is the  type 3 (see the red curve in figure \ref{fig:4.1}\subref{fig:4.1a}).
Then the profile curve $\gamma_{\delta_c}(s)$, $-\infty<s<\infty$
generates  an embedded complete $\mu$-hypersurface diffeomorphic to $\mathbb{R}\times\mathbb{S}^{n-1}$.
Letting $\lambda = -\mu$,  switching  the unit normal vector to its opposite, we obtain a $\lambda$-hypersurface which is non-convex by (\ref{0426eq2.3}).
This completes the proof of the theorem \ref{0316thm1.1}.
A $\lambda$-cylinder in $\R^3$ with $\lambda=0.4$ is shown in figure \ref{fig:4.2}.

\vskip2mm
\noindent
We should remark  why we switch the unit normal vector because of  the following reason.
The hypersurface constructed in theorem \ref{0316thm1.1} is rotationally invariant,
it can be regarded as a deformation of the standard cylinder.
So we may think that the normal vector pointing to the axis is inward like the case of standard cylinder.
From this point of view, the unit normal vector we choose in (\ref{0316eq2.2}) is outward.

	\noindent
	\begin{figure}[H]\centering
		\subfigure[] {
			
			\includegraphics[width=0.27\columnwidth]{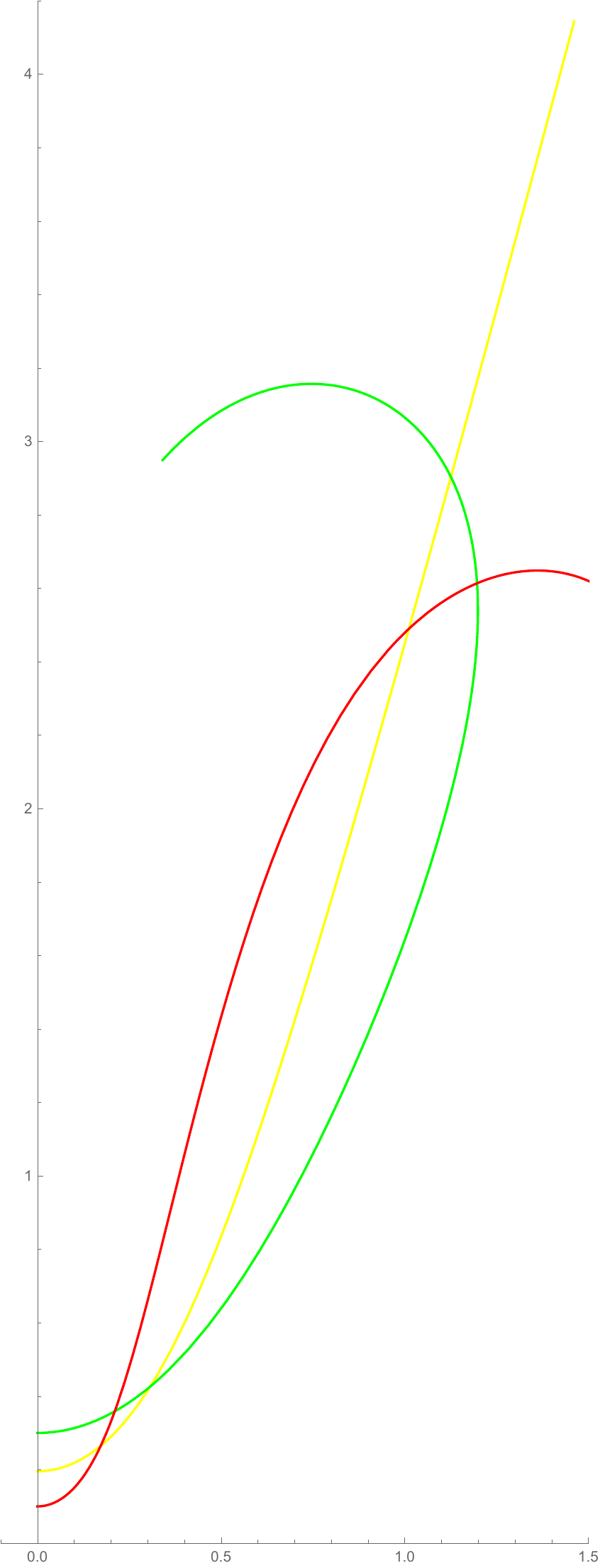}  \label{fig:4.1a}
		}\hfil
		\subfigure[] {
			
			\includegraphics[width=0.375\columnwidth]{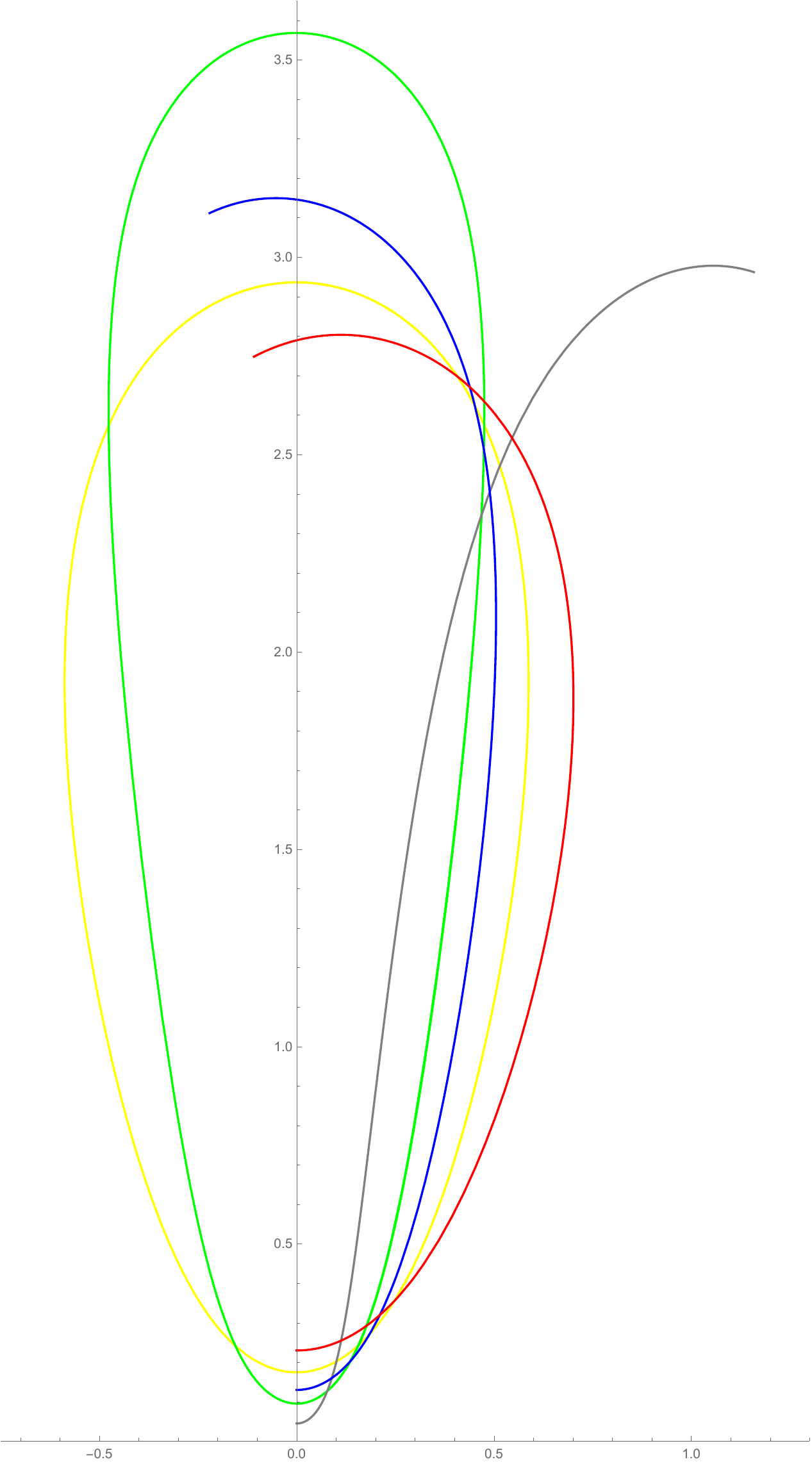}     \label{fig:4.1b}
		}
		\caption{\ The  profile curves of the $\lambda$-hypersurfaces. In subfigure (a), $n=2$, $\lambda = 0.4$. In subfigure (b), $n=2$, $\lambda=-0.24$. }   \label{fig:4.1}
		
	\end{figure}

\noindent {\it Proof of Theorem \ref{0316thm1.2}.}
Given $\lambda$ in $ (\max\left\lbrace\bar{\lambda}(\delta_1^*),\bar{\lambda}(\delta_2^*)\right\rbrace, 0)$
where $\bar{\lambda}(\delta_1^*)$ and $\bar{\lambda}(\delta_2^*)$ are as in lemma \ref{0530lem4.1},
we consider the the set
\[
\left\lbrace \delta \in (0, \delta_2^*): \delta\ {\rm is\ of\ type\ 1.1 } \right\rbrace
\]
which is non-empty by the  lemma \ref{0530lem4.1}.
Define
\[
\delta_{t_1} = \inf \left\lbrace \delta \in (0, \delta_2^*): \delta\ {\rm is\ of\ type\ 1.1 } \right\rbrace
\]
and
\[
\delta_{t_2} = \sup \left\lbrace \delta \in (0, \delta_2^*): \delta\ {\rm is\ of\ type\ 1.1 } \right\rbrace.
\]
The proposition \ref{0529lem3.7} shows that ${\delta_{t_1}} > 0$.
Therefore we can further utilize the  lemma \ref{231119lem3.8} and the  proposition \ref{231123lem4.4}
to obtain that $\delta_{t_1}$ is neither of type 1.1, type 1.3, type 2 nor of type 3.
In other words, $\delta_{t_1}$ is of type 1.2 (see the green curve in figure \ref{fig:4.1}\subref{fig:4.1b}).
Similar argument will prove that $\delta_{t_2}$ is of type 1.2 (see the yellow curve in figure \ref{fig:4.1}\subref{fig:4.1b}).
This completes the proof of theorem \ref{0316thm1.2}. Two $\lambda$-tori in $\R^3$ with $\lambda=-0.24$ are shown in figure \ref{fig:4.3}.

\begin{remark}
	Numerical evidence shows that $\delta$ in $(0,R_{\mu_1})$ may all be of type 1 for $\mu_1>0$, so the examples in theorem \ref{0316thm1.1} may not exist for $\lambda<0$.
	Numerical evidence also shows that  $\delta$ in $(0,R_{\mu_2})$ may all be of type 2 for $\mu_2<0$ close to $-\infty$, so the examples in theorem \ref{0316thm1.1} may not exist for $\lambda>0$ close to $\infty$, the examples in theorem \ref{0316thm1.2} may not exist for $\lambda<0$ close to $-\infty$.
\end{remark}
	\noindent
\begin{figure}[H]
	\centering	
	\includegraphics[width=0.44\columnwidth]{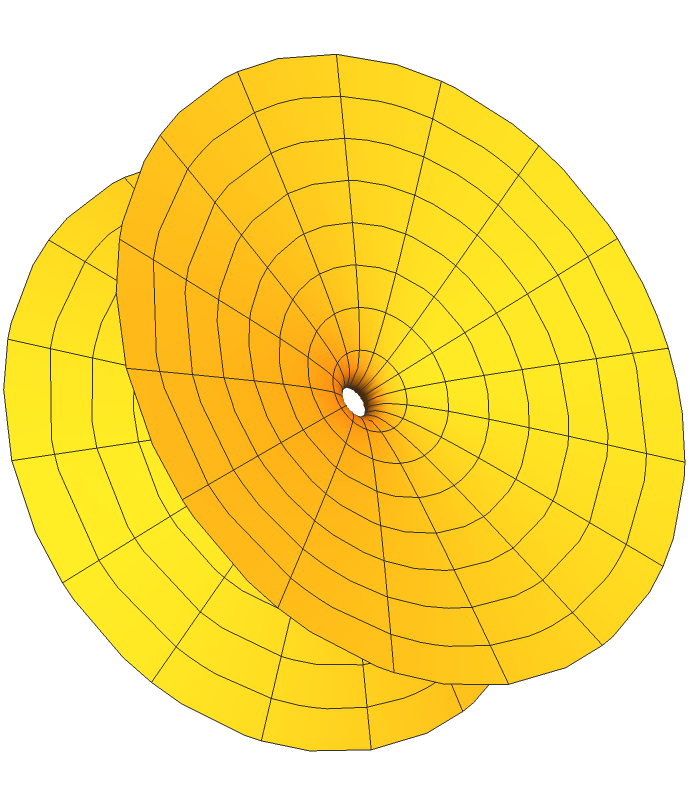}

	\caption{\ A $\lambda$-cylinder in $\R^3$ where $\lambda=0.4$. }   \label{fig:4.2}
	
\end{figure}
\begin{figure}[H]
	\centering
	\subfigure[] {	
		\includegraphics[width=0.35\columnwidth]{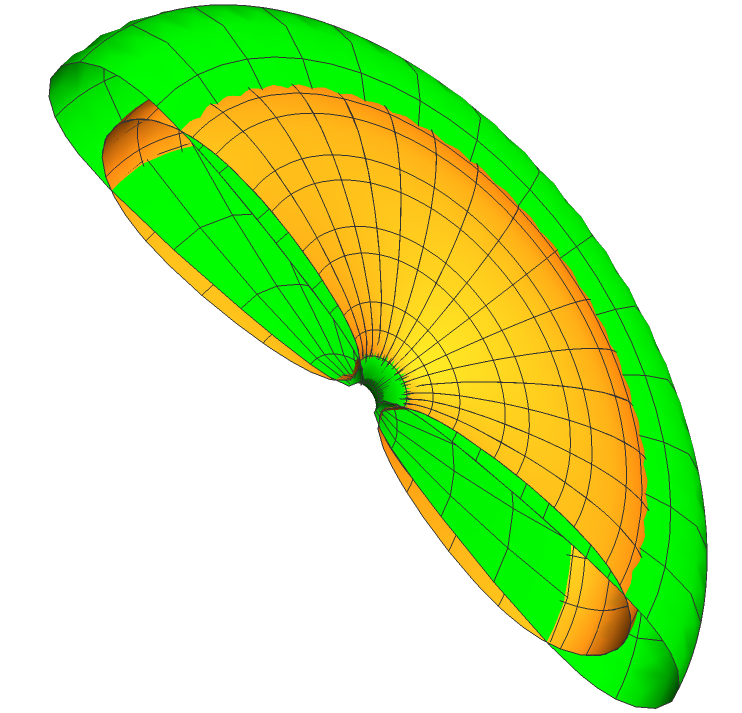}   \label{fig:4.3a}
	}\hfil
	\subfigure[] {	
		\includegraphics[width=0.36\columnwidth]{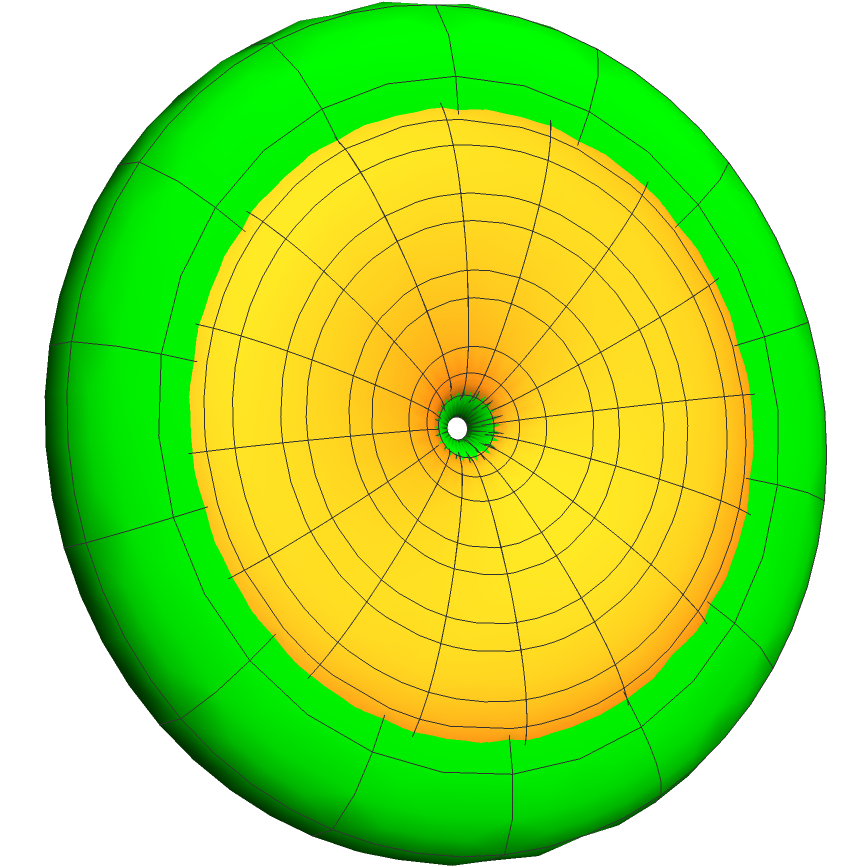}   \label{fig:4.3b}
	}
	\caption{\ Two $\lambda$-tori in $\R^3$ where $\lambda=-0.24$. }   \label{fig:4.3}
	
\end{figure}

\end{document}